\newcounter{satznum}
\newtheorem{theorem}{Theorem}[satznum]
\newtheorem{lemma}[theorem]{Lemma}
\newtheorem{corollary}[theorem]{Corollary}
\newtheorem{proposition}[theorem]{Proposition}
\newenvironment{remark}
 {\begin{trivlist}\item[]{\bf Remark.}}
 {\end{trivlist}}
\newenvironment{remarks}
 {\begin{trivlist}\item[]{\bf Remarks.}}
 {\end{trivlist}}
\newenvironment{proof}
 {\begin{trivlist}\item[]{\bf Proof.}}
 {\end{trivlist}}
\gdef\cz{{\mathbb C}} 
\gdef\me{{\mathbb E}} 
\gdef\nz{{\mathbb N}} 
\gdef\pr{{\mathbb P}} 
\gdef\rz{{\mathbb R}} 
\newcounter{todocounter}
\def\@MRExtract#1 #2!{#1}
\newcommand{\MR}[1]{
  \xdef\@MRSTRIP{\@MRExtract#1 !}
  \href{http://www.ams.org/mathscinet-getitem?mr=\@MRSTRIP}{MR\@MRSTRIP}}
\begin{document}
   \section*{ON THE BLOCK COUNTING PROCESS AND THE FIXATION LINE OF
   THE BOLTHAUSEN--SZNITMAN COALESCENT}
   {\sc Jonas Kukla and Martin M\"ohle}\footnote{Mathematisches Institut, Eberhard Karls Universit\"at T\"ubingen,
   Auf der Morgenstelle 10, 72076 T\"ubingen, Germany, E-mail addresses:
   jonas.kukla@uni-tuebingen.de, martin.moehle@uni-tuebingen.de}
\begin{center}
   \today\\
\end{center}
\begin{abstract}
   The block counting process and the fixation line of the
   Bolthausen--Sznitman coalescent are analyzed.
   Spectral decompositions for their generators and transition probabilities
   are provided leading to explicit expressions for functionals such as
   hitting probabilities and absorption times.
   It is furthermore shown that the block counting process and the
   fixation line of the Bolthausen--Sznitman $n$-coalescent, properly scaled,
   converge in the Skorohod topology 
   to the Mittag--Leffler process and Neveu's continuous-state branching
   process respectively as the sample size $n$ tends to infinity.
   Strong relations to Siegmund duality and to Mehler semigroups and
   self-decomposability are pointed out.

   \vspace{2mm}

   \noindent Keywords:
   block counting process; Bolthausen--Sznitman coalescent;
   fixation line; Mehler semigroup; Mittag--Leffler process;
   Neveu's continuous-state branching process;
   self-decomposability; Siegmund duality; spectral decomposition

   \vspace{2mm}

   \noindent Running head: On the Bolthausen--Sznitman coalescent

   \vspace{2mm}

   \noindent 2010 Mathematics Subject Classification:
            Primary 60F05;   
            60J27            
            Secondary 92D15; 
            97K60            
\end{abstract}
\subsection{Introduction} \label{intro}
\setcounter{theorem}{0}
   Exchangeable coalescents are Markovian processes $(\Pi_t)_{t\ge 0}$
   with state space ${\cal P}$, the set of partitions of $\nz:=\{1,2,\ldots\}$.
   These processes have attracted the interest of several
   researchers, mainly in biology, mathematics and physics, during the last decades.
   The full family of exchangeable coalescents (with simultaneous multiple collisions)
   is a class of partition valued Markovian processes with a rich
   probabilistic structure and hence important
   for mathematical studies. Moreover, coalescents are useful in mathematical population
   genetics to model the ancestry of a sample of individuals or genes and
   therefore important for biological applications.

   Exchangeable coalescents with multiple collisions
   but without simultaneous multiple collisions
   are characterized by a measure $\Lambda$ on the unit interval $[0,1]$ and
   therefore called $\Lambda$-coalescents.
   For further information on these processes we refer the reader
   to the independent works of Pitman \cite{pitman} and
   Sagitov \cite{sagitov2}. The most important coalescent is probably the
   Kingman coalescent \cite{kingman}, which allows only for binary mergers of ancestral
   lineages. In this case the measure $\Lambda$ is the Dirac measure
   at $0$.

   For $t\ge 0$ let $N_t$ denote the number of blocks of
   $\Pi_t$ and let $N_t^{(n)}$ denote the number of blocks of $\Pi_t^{(n)}$,
   where $\Pi_t^{(n)}$ denotes the partition of $\Pi_t$ restricted to a
   sample of size $n\in\nz$.
   The processes $(N_t)_{t\ge 0}$ and $(N_t^{(n)})_{t\ge 0}$
   are called the block counting processes of $(\Pi_t)_{t\ge 0}$ and
   $(\Pi_t^{(n)})_{t\ge 0}$ respectively.

   H\'enard \cite{henard2} introduced the so-called
   fixation line $(L_t)_{t\ge 0}$ of a $\Lambda$-coalescent.
   Recently \cite{gaisermoehle} the fixation line was extended to arbitrary
   exchangeable coalescents. One possible definition of the fixation line is based on the
   lookdown construction going back to Donnelly and Kurtz
   \cite{donnellykurtz1,donnellykurtz2}. For further information on the
   fixation line we refer the reader to \cite{gaisermoehle} and \cite{henard2}.
   In the following $(L_t^{(n)})_{t\ge 0}$ denotes the fixation line
   with initial state $L_0^{(n)}=n$.

   In this article we focus on the Bolthausen--Sznitman coalescent
   \cite{bolthausensznitman}, which is the particular $\Lambda$-coalescent
   with $\Lambda$ being the uniform distribution on the unit interval.
   The generator $Q=(q_{ij})_{i,j\in\nz}$ of the block counting process
   and the generator
   $\Gamma=(\gamma_{ij})_{i,j\in\nz}$ of the fixation line
   of the Bolthausen--Sznitman coalescent have entries
   (see, for example, \cite[Eq.~(1.1)]{moehlepitters} and
   \cite[p.~3015, Eq.~(2.8) with $\alpha=1$]{henard2})
   $$
   q_{ij}\ =\
   \left\{
      \begin{array}{cl}
         \displaystyle\frac{i}{(i-j)(i-j+1)} & \mbox{for $j<i$}\\
         1-i & \mbox{for $j=i$},\\
         0 & \mbox{for $j>i$.}
      \end{array}
   \right.
   \quad\mbox{and}\quad
   \gamma_{ij}\ =\
   \left\{
      \begin{array}{cl}
      \displaystyle\frac{i}{(j-i)(j-i+1)} & \mbox{for $j>i$,}\\
      -i & \mbox{for $j=i$,}\\
      0 & \mbox{for $j<i$.}
      \end{array}
   \right.
   $$
   The block counting process and the corresponding generator $Q$ have
   been studied intensively in the literature. In this article we focus on
   both processes $(N_t)_{t\ge 0}$ and $(L_t)_{t\ge 0}$
   with an emphasis on the fixation line $(L_t)_{t\ge 0}$,
   which has been studied less intensively so far. We furthermore
   stress the duality relation between both processes.

   In Section \ref{spectral} the processes $(N_t)_{t\ge 0}$
   and $(L_t)_{t\ge 0}$ are
   analyzed with an emphasis on spectral decompositions. These
   spectral decompositions lead to explicit expressions for several
   functionals of these processes such as hitting probabilities and
   absorption times.

   Section \ref{asymptotics} deals with the behavior of the block
   counting process $(N_t^{(n)})_{t\ge 0}$ and the fixation line
   $(L_t^{(n)})_{t\ge 0}$ as the sample size $n$ tends to infinity.
   The main convergence result (Theorem \ref{main}) states that both
   processes, properly scaled, converge in the Skorohod sense as
   $n\to\infty$ to the Mittag--Leffler process and to Neveu's
   continuous-state branching process respectively.

   The proofs provided in Section \ref{proofs} rely on both analytic
   and probabilistic arguments which demonstrates the interplay between
   analysis and probability. A short appendix collects some results of
   independent interest used in the proofs.

\subsection{Spectral decompositions and applications} \label{spectral}
\setcounter{theorem}{0}
Spectral decompositions are of fundamental interest since
they lead to diagonal representations of the corresponding operators
or matrices which simplify many mathematical calculations and
numerical computations significantly. Explicit spectral decompositions
for (the block counting process of) the Kingman coalescent and the
Bolthausen--Sznitman coalescent are provided in \cite{kuklapitters}
and \cite{moehlepitters}. We are interested in analog spectral decompositions
for the fixation line.
A spectral decomposition of the generator $\Gamma$ of the fixation
line of the Kingman coalescent is provided in the appendix (Lemma
\ref{kingmanspectral}) for completeness.
Our first result (Theorem \ref{fixationtheo}) provides an explicit spectral decomposition for the
generator $\Gamma$ of the fixation line of the Bolthausen--Sznitman
coalescent. In the following $s(i,j)$ and $S(i,j)$ denote the
Stirling numbers of the first and second kind respectively.
\begin{theorem}[Spectral decomposition of the generator of the fixation line]
\label{fixationtheo}
\ \\
   The generator $\Gamma=(\gamma_{ij})_{i,j\in\nz}$ of the fixation line
   $(L_t)_{t\ge 0}$ of the Bolthausen--Sznitman coalescent has
   spectral decomposition $\Gamma=RDL$, where $D=(d_{ij})_{i,j\in\nz}$
   is the diagonal matrix with entries $d_{ij}=-i$ for $i=j$ and $d_{ij}=0$
   for $i\ne j$, and $R=(r_{ij})_{i,j\in\nz}$ and $L=(l_{ij})_{i,j\in\nz}$
   are upper right triangular matrices with entries
   \begin{equation} \label{randl}
      r_{ij}\ =\ \frac{i!}{j!}(-1)^{i+j}S(j,i)
      \quad\mbox{and}\quad
      l_{ij}\ =\ \frac{i!}{j!}(-1)^{i+j}s(j,i),
      \qquad i,j\in\nz.
   \end{equation}
\end{theorem}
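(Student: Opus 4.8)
The plan is to verify the factorization $\Gamma = RDL$ directly, using the known generating-function identities for Stirling numbers. First I would record the orthogonality relation $\sum_{k} s(j,k) S(k,i) = \sum_k S(j,k) s(k,i) = \delta_{ij}$, which, after incorporating the factorials and signs in \eqref{randl}, shows that $R$ and $L$ are inverse to one another; this is the standard fact that the matrices of (signed) Stirling numbers of the two kinds are mutually inverse. Hence it suffices to prove the single matrix identity $\Gamma R = RD$, i.e.\ that for each fixed $i$ the $i$-th column of $R$ is a right eigenvector of $\Gamma$ with eigenvalue $-i$. Equivalently, for all $m,i\in\nz$,
$$
\sum_{j\ge m}\gamma_{mj}\,r_{ji}\ =\ -i\,r_{mi}.
$$
Substituting $r_{ji} = \frac{j!}{i!}(-1)^{i+j}S(i,j)$ and $\gamma_{mj}$ from its definition, and using $\gamma_{mm}=-m$ together with $\gamma_{mj} = m/((j-m)(j-m+1))$ for $j>m$, this reduces to an identity among Stirling numbers of the second kind that I expect to follow from the recurrence $S(i,j) = S(i-1,j-1) + jS(i-1,j)$ or, more slickly, from the exponential generating function $\sum_{i\ge j} S(i,j)\frac{x^i}{i!} = \frac{(e^x-1)^j}{j!}$.

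The cleanest route is the generating-function one. Fix $i$ and form $f_m(x) := \sum_{i\ge \cdot}\dots$ — more precisely, I would multiply the claimed eigen-relation by $x^i/i!$ and sum over $i$, so that the column-vector identity $\Gamma R = RD$ becomes a statement about the vector of power series $g_m(x) := \sum_{i} r_{mi}\,x^i/i! = (-1)^m \sum_{i\ge m} S(i,m)(-x)^i/i! \cdot(\text{sign/factorial bookkeeping})$. Carrying the bookkeeping through, $g_m$ is essentially $\bigl(1-e^{-x}\bigr)^m/m!$ up to a sign, and the action of $D$ corresponds to applying the operator $-x\,\frac{d}{dx}$ (since $D$ multiplies the $i$-th coordinate by $-i$ and $x\frac{d}{dx}$ multiplies $x^i$ by $i$). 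So the identity to check becomes: applying $\Gamma$ to the sequence $(g_m)_m$ equals applying $-x\frac{d}{dx}$ to each $g_m$. Writing $y := 1-e^{-x}$ so that $g_m = \pm y^m/m!$ and $x\frac{d}{dx} = (1-y)\log\frac{1}{1-y}\cdot\frac{d}{dy}$, one sees the telescoping structure: the sum $\sum_{j>m} \frac{m}{(j-m)(j-m+1)} g_j$ becomes $m\sum_{k\ge 1}\frac{1}{k(k+1)}\,y^{m+k}/(m+k)!$, and the required identity is an instance of the expansion $\log\frac{1}{1-y} = \sum_{k\ge1} y^k/k$ combined with the partial-fraction identity $\frac{1}{k(k+1)} = \frac1k-\frac1{k+1}$.

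I would then do the analogous (in fact transposed) computation for $L$, or simply deduce the left-eigenvector statement $L\Gamma = DL$ from the already-established facts $\Gamma R = RD$ and $LR = I$ by writing $L\Gamma = L\Gamma(RL) = L(RD)L = DL$. Finally I would remark that all the sums involved are, for each fixed entry, finite (the matrices are upper triangular), so no convergence issues arise and the generating-function manipulations can be read off coefficientwise; this legitimizes the formal power-series argument. The main obstacle is the second step — pinning down the correct normalization so that $g_m$ comes out to exactly $(1-e^{-x})^m/m!$ (with the right sign), and then recognizing the $\Gamma$-action as the differential operator $-x\,d/dx$; once that dictionary is set up, the remaining identity is the elementary partial-fraction/logarithm expansion above. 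Everything else (the inverse relation $LR=I$, the reduction to a single eigen-identity, the transpose argument for $L$) is routine.
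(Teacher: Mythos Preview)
Your approach is correct and close in spirit to the paper's first proof, but the logic is organized differently. The paper starts from the abstract recursion that any spectral decomposition of $\Gamma$ must satisfy (the eigenvalues $-1,-2,\ldots$ being distinct, the eigenvectors are determined up to scaling), derives a first-order ODE for the row generating functions $l_i(z)=\sum_j l_{ij}z^j$, solves it to obtain $l_i(z)=(-\log(1-z))^i$, and then inverts to get $r_i(z)=(1-e^{-z})^i$; coefficient extraction yields the Stirling formulas. You instead take the formulas \eqref{randl} as given, observe that $RL=I$ is Stirling orthogonality, and verify $\Gamma R=RD$ directly by checking the generating-function identity $\sum_j\gamma_{mj}(1-e^{-x})^j=-x\,\tfrac{d}{dx}(1-e^{-x})^m$, which after the substitution $y=1-e^{-x}$ reduces to $\sum_{k\ge1}y^k/(k(k+1))=1+(1-1/y)\log(1-y)$. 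Both routes rest on the same analytic identities; yours is shorter because it only verifies, while the paper's derivation explains where the formulas come from. The paper also offers a second, independent proof via the Siegmund duality $H\Gamma^\top=QH$ and the known spectral decomposition of the block-counting generator $Q$, which you do not invoke.

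One small correction in your bookkeeping: to make the dictionary ``$D$ acts as $-x\,d/dx$'' exact you should weight by $x^i$, not $x^i/i!$. Then $\sum_i r_{mi}x^i=(1-e^{-x})^m$ on the nose (no stray $1/m!$), and multiplying the eigen-relation by $x^i$ and summing gives precisely the displayed differential identity. You flagged this normalization issue yourself, and it is the only place where the sketch needs tightening; once fixed, the deduction $L\Gamma=L(\Gamma R)L=L(RD)L=DL$ completes the argument.
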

The following corollaries demonstrate that spectral decompositions are
useful to analyze the underlying processes.
\begin{corollary}[Branching property/transition probabilities of the fixation line] \label{trans}
   \ \\
   For the Bolthausen--Sznitman coalescent, the random variable $L_t^{(i)}$
   has probability generating function (pgf)
   \begin{equation} \label{trans1}
      \me(z^{L_t^{(i)}})\ =\ (1-(1-z)^{e^{-t}})^i,
      \qquad |z|<1, t\ge 0,i\in\nz.
   \end{equation}
   Thus, $(L_t)_{t\ge 0}$ is a Markovian continuous-time branching
   process with state space $\nz$ and offspring distribution
   $p_k=1/(k(k-1))$, $k\in\{2,3,\ldots\}$ having infinite mean.
   Moreover, the transition probabilities $p_{ij}(t):=\pr(L_t=j\,|\,L_0=i)
   =\pr(L_t^{(i)}=j)$ are given by
   \begin{equation} \label{trans2}
      p_{ij}(t)
      \ = \ (-1)^{i+j}\frac{i!}{j!}\sum_{k=i}^j S(k,i) e^{-tk} s(j,k)
      \ = \ (-1)^j\sum_{k=1}^i (-1)^k {i\choose k}{{e^{-t}k}\choose j},
   \qquad i,j\in\nz.
   \end{equation}
\end{corollary}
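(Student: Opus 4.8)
The plan is to derive everything from the spectral decomposition $\Gamma=RDL$ of Theorem~\ref{fixationtheo}. Since $R$ and $L$ are mutually inverse (this is Stirling inversion, encoded in (\ref{randl})) and $D$ is diagonal with entries $-i$, the transition semigroup is $P(t)=(p_{ij}(t))_{i,j\in\nz}=e^{t\Gamma}=R\,e^{tD}\,L$, where $e^{tD}$ is diagonal with entries $e^{-ti}$; as $R$, $D$, $L$ are upper triangular, the entries of the product $Re^{tD}L$ are the finite sums $\sum_{k}r_{ik}e^{-tk}l_{kj}$. Using that $r_{ik}=0$ unless $k\ge i$ while $l_{kj}=0$ unless $k\le j$, the internal index is confined to $i\le k\le j$, and one obtains
\[
   p_{ij}(t)\;=\;\sum_{k}r_{ik}\,e^{-tk}\,l_{kj}
   \;=\;(-1)^{i+j}\,\frac{i!}{j!}\sum_{k=i}^{j}S(k,i)\,e^{-tk}\,s(j,k),
\]
which is the first equality in (\ref{trans2}).

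Next I would compute the pgf by summing this identity against $z^{j}$. For $|z|<1$ the resulting double series over $i\le k\le j<\infty$ converges absolutely (bound $|s(j,k)|$ by the unsigned Stirling number of the first kind and use the two generating functions below with $|z|$ in place of $z$), so the order of summation may be interchanged, giving
\[
   \me\bigl(z^{L_t^{(i)}}\bigr)\;=\;i!\sum_{k\ge i}S(k,i)\,e^{-tk}\,(-1)^{i}\sum_{j\ge k}\frac{s(j,k)}{j!}\,(-z)^{j}.
\]
Now the exponential generating function $\sum_{j\ge k}s(j,k)x^{j}/j!=(\log(1+x))^{k}/k!$ of the Stirling numbers of the first kind, taken at $x=-z$, turns the inner sum into $(\log(1-z))^{k}/k!$, and then the exponential generating function $\sum_{k\ge i}S(k,i)y^{k}/k!=(e^{y}-1)^{i}/i!$ of the Stirling numbers of the second kind, taken at $y=e^{-t}\log(1-z)$, collapses the outer sum to
\[
   \me\bigl(z^{L_t^{(i)}}\bigr)\;=\;(-1)^{i}\bigl((1-z)^{e^{-t}}-1\bigr)^{i}\;=\;\bigl(1-(1-z)^{e^{-t}}\bigr)^{i},
\]
which is (\ref{trans1}). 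This is the computational core, and the bookkeeping with the two Stirling generating functions (plus the convergence check) is where care is genuinely needed.

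The product form of (\ref{trans1}) says $\me(z^{L_t^{(i)}})=\bigl(\me(z^{L_t^{(1)}})\bigr)^{i}$ for all $t\ge0$, which together with the Markov property is the branching property, so $(L_t)_{t\ge0}$ is a continuous-time Markov branching process on $\nz$. Its dynamics can be read off from $\Gamma$ directly: the jump rate from $i$ to $i+m$ equals $\gamma_{i,i+m}=i\cdot\frac{1}{m(m+1)}$, i.e.\ $i$ times a per-lineage rate, so each of the $i$ lineages is independently replaced by $k=m+1$ lineages at rate $\frac{1}{(k-1)k}$; these rates sum to $\sum_{k\ge2}\bigl(\frac{1}{k-1}-\frac{1}{k}\bigr)=1$ and have mean $\sum_{k\ge2}\frac{1}{k-1}=\infty$. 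Equivalently, one verifies from (\ref{trans1}) that $F(t,z):=1-(1-z)^{e^{-t}}$ solves the backward equation $\partial_tF=h(F)-F$ with $F(0,z)=z$, where $h(s)=s+(1-s)\log(1-s)=\sum_{k\ge2}s^{k}/(k(k-1))$ is the offspring pgf; hence $p_k=1/(k(k-1))$ for $k\ge2$, with infinite mean.

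Finally, the second expression in (\ref{trans2}) comes from expanding (\ref{trans1}) directly rather than through Stirling numbers: the binomial theorem gives $\bigl(1-(1-z)^{e^{-t}}\bigr)^{i}=\sum_{k=0}^{i}{i\choose k}(-1)^{k}(1-z)^{e^{-t}k}$, and the generalized binomial series $(1-z)^{e^{-t}k}=\sum_{j\ge0}{e^{-t}k\choose j}(-z)^{j}$ then identifies the coefficient of $z^{j}$ as $(-1)^{j}\sum_{k=0}^{i}(-1)^{k}{i\choose k}{e^{-t}k\choose j}$; the $k=0$ term vanishes for $j\ge1$, so the sum may start at $k=1$. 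The only real obstacle in the argument is the double-sum manipulation in the second paragraph.
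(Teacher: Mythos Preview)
Your proof is correct and follows essentially the same route as the paper's: derive the first formula in (\ref{trans2}) from $P(t)=Re^{tD}L$, sum against $z^j$ and collapse the double sum via the two Stirling exponential generating functions to obtain (\ref{trans1}), and then expand (\ref{trans1}) by two binomial series to get the second formula in (\ref{trans2}). Your explicit justification of absolute convergence for the interchange of sums and your derivation of the offspring law (either from the rates $\gamma_{i,i+m}$ or via the backward equation for $F(t,z)$) are welcome additions that the paper leaves implicit.
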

\begin{remarks}
\begin{enumerate}
   \item[1.] For $i=1$ it follows that $L_t=L_t^{(1)}$ has pgf
      $\me(z^{L_t})=1-(1-z)^\alpha=-\sum_{j=1}^\infty {\alpha\choose j} (-z)^j$
      and distribution
      \begin{equation} \label{alphadist}
         \pr(L_t=j)
         \ =\ p_{1j}(t)
         \ =\ (-1)^{j+1}{\alpha\choose j}
         \ =\ \frac{\alpha\Gamma(j-\alpha)}{\Gamma(1-\alpha)\Gamma(j+1)},
      \qquad j\in\nz,
      \end{equation}
      where $\alpha:=e^{-t}$. Note that $\pr(L_t=j)\sim\alpha/(\Gamma(1-\alpha)
      j^{\alpha+1})$ as $j\to\infty$ and that $L_t$ has a Pareto like tail
      $\pr(L_t\ge j)=\Gamma(j-\alpha)/(\Gamma(1-\alpha)\Gamma(j))\sim
      1/(\Gamma(1-\alpha)j^\alpha)$ as $j\to\infty$.
      Thus, $\me(L_t^q)=\sum_{j=1}^\infty
      j^q\pr(L_t=j)<\infty$ if and only if $q<\alpha$. Particular
      reciprocal factorial moments of $L_t$ are known explicitly. For
      example,
      $$
      \me\bigg(\frac{1}{(L_t+1)(L_t+2)\cdots (L_t+k)}\bigg)
      \ =\ \frac{\alpha}{\Gamma(1-\alpha)}\sum_{j=1}^\infty
           \frac{\Gamma(j-\alpha)}{\Gamma(j+k+1)}
      \ =\ \frac{\alpha}{k!(\alpha+k)},\ \ k\in\nz.
      $$
      The distribution (\ref{alphadist}) and similar distributions occur in
      \cite[p.~9]{huilletmoehle}, \cite[p.~225]{iksanovmoehle} and
      \cite[p.~70, Eq.~(3.38)]{pitman2}.
   \item[2.] The pgf $f(s):=\sum_{k=2}^\infty p_ks^k=s+(1-s)\log(1-s)$ of the
      offspring distribution satisfies
      $$
      \int_{(1-\varepsilon,1)} \frac{\lambda({\rm d}s)}{f(s)-s}
      \ =\ \int_{(1-\varepsilon,1)}\frac{\lambda({\rm d}s)}{(1-s)\log(1-s)}
      \ =\ \int_{(0,\varepsilon)}\frac{\lambda({\rm d}x)}{x\log x}
      \ =\ -\infty
      $$
      for all $\varepsilon\in (0,1)$, where $\lambda$ denotes Lebesgue
      measure on $(0,1)$. This implies (Harris \cite[p.~107]{harris})
      that the fixation line $(L_t)_{t\ge 0}$ does not explode, in agreement
      (see \cite{gaisermoehle}) with the fact that the Bolthausen--Sznitman
      coalescent stays infinite.
\end{enumerate}
\end{remarks}
As a second application we study the probability
$h(i,j)=\pr(L_t^{(i)}=j\mbox{ for some $t\ge 0$})$
that the fixation line hits state $j\in\nz$ started from state $i\in\nz$.
\begin{corollary}[Hitting probabilities] \label{hit}
   The hitting probabilities $h(i,j)$ have horizontal generating function
   \begin{equation} \label{hitgen}
      \sum_{j=i}^\infty h(i,j)z^{j-1}
      \ =\ \frac{z^i}{(1-z)(-\log(1-z))},\qquad i\in\nz, |z|<1.
   \end{equation}
   In particular $h(i,j)=h(1,j-i+1)$ depends on $i$ and $j$ only via $j-i$.
   Moreover, for all $i\in\nz$,
   \begin{equation} \label{hitasy}
      h(i,j)\ =\ \frac{1}{\log j} - \frac{\gamma}{\log^2j} +
      O\bigg(\frac{1}{\log^3j}\bigg),\qquad j\to\infty,
   \end{equation}
   where $\gamma:=-\Gamma'(1)\approx 0.577216$ denotes the Euler--Mascheroni
   constant. The hitting probability $h(i,j)$ can be computed via
   \begin{equation} \label{hit0}
      h(i,j)\ =\ \sum_{k=1}^{j-i}\pr(\eta_1+\cdots+\eta_k=j-i),
      \qquad 1\le i<j,
   \end{equation}
   where $\eta_1,\eta_2,\ldots$ are iid random variables with distribution
   $\pr(\eta_1=n):=u_n:=1/(n(n+1))$, $n\in\nz$.
   The hitting probabilities can be also expressed in terms of
   the Stirling numbers $s(.,.)$ and $S(.,.)$ of the first and second kind
   as
   \begin{eqnarray}
      h(i,j)
      & = & (-1)^{i+j}\frac{i!}{(j-1)!}\sum_{k=i}^j\frac{s(j,k)S(k,i)}{k}\label{hit1a}\\
      & = & (-1)^{j-i}\frac{1}{(j-i)!}\sum_{k=1}^{j-i+1}\frac{s(j-i+1,k)}{k},\label{hit1b}
      \qquad 1\le i\le j.
   \end{eqnarray}
   Moreover, $h(i,j)$ has representations
   \begin{equation} \label{hit2}
      h(i,j)
      \ =\ \frac{1}{(j-i)!}\int_0^1 \frac{\Gamma(j-i+x)}{\Gamma(x)}\,{\rm d}x
      \ =\ \frac{1}{(j-i)!}\sum_{k=0}^{j-i}\frac{|s(j-i,k)|}{k+1},
      \qquad 1\le i\le j.
   \end{equation}
\end{corollary}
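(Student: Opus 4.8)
The plan is to reduce everything to two facts: the embedded jump chain of the fixation line is a spatially homogeneous random walk, and the generating function appearing in (\ref{hitgen}) is analytically transparent. From the entries $\gamma_{ij}$ one reads off that the holding rate in state $i$ is $-\gamma_{ii}=i$ and that the embedded jump chain moves from $i$ to $i+m$ with probability $\gamma_{i,i+m}/i=1/(m(m+1))=:u_m$, $m\in\nz$, which does not depend on $i$; hence the jump chain is the random walk $i,\ i+\eta_1,\ i+\eta_1+\eta_2,\dots$ with iid increments $\eta_1,\eta_2,\dots$ distributed according to $(u_m)_{m\in\nz}$. Since $(L_t)_{t\ge0}$ does not explode and is nondecreasing with strictly increasing jumps, $h(i,j)$ equals the probability that this random walk hits $j$; because $\eta_k\ge1$, only the first $j-i$ partial sums can equal $j-i$, which gives (\ref{hit0}), and spatial homogeneity of the walk gives $h(i,j)=h(1,j-i+1)$, so that $h$ depends on $i$ and $j$ only through $j-i$.

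Next I would sum the corresponding renewal series. With $\phi(z):=\sum_{m\ge1}u_mz^m$ and $u_m=1/m-1/(m+1)$ one obtains $\phi(z)=1+(1-z)\log(1-z)/z$, hence $1-\phi(z)=-(1-z)\log(1-z)/z$ and
$$
\sum_{n\ge0}h(1,n+1)\,z^n\ =\ \sum_{k\ge0}\phi(z)^k\ =\ \frac{1}{1-\phi(z)}\ =\ \frac{z}{(1-z)(-\log(1-z))};
$$
multiplying by $z^{i-1}$ and using $h(i,j)=h(1,j-i+1)$ yields (\ref{hitgen}). For the representations (\ref{hit2}) I would combine this with the elementary identity $\int_0^1(1-z)^{-x}\,{\rm d}x=z/((1-z)(-\log(1-z)))$ and the binomial expansion $(1-z)^{-x}=\sum_{n\ge0}(\Gamma(n+x)/(\Gamma(x)\,n!))\,z^n$, whose $n$-th coefficient is the rising factorial $x(x+1)\cdots(x+n-1)/n!$: comparing coefficients of $z^n$ gives the first formula in (\ref{hit2}), and writing the rising factorial as $\sum_k|s(n,k)|\,x^k$ and integrating termwise ($\int_0^1x^k\,{\rm d}x=1/(k+1)$) gives the second.

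For (\ref{hit1a})--(\ref{hit1b}) I would exploit that $(L_t)_{t\ge0}$ never returns to a state once it has left it, so that the mean total sojourn time in $j$ started from $i$ equals both $h(i,j)/(-\gamma_{jj})=h(i,j)/j$ and $\int_0^\infty p_{ij}(t)\,{\rm d}t$; hence $h(i,j)=j\int_0^\infty p_{ij}(t)\,{\rm d}t$. Inserting the spectral expansion (\ref{trans2}) from Corollary~\ref{trans} and integrating each exponential ($\int_0^\infty e^{-tk}\,{\rm d}t=1/k$) produces (\ref{hit1a}), and specializing to $h(1,j-i+1)$ together with $S(k,1)=1$ gives (\ref{hit1b}).

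Finally, for the asymptotics (\ref{hitasy}) I would start from $h(1,n+1)=\int_0^1\Gamma(n+x)/(\Gamma(n+1)\Gamma(x))\,{\rm d}x$ and use the uniform estimate $\Gamma(n+x)/\Gamma(n+1)=n^{x-1}(1+O(1/n))$ for $x\in[0,1]$ to reduce the problem to the Laplace-type integral $\int_0^1n^{x-1}/\Gamma(x)\,{\rm d}x$, whose mass concentrates near $x=1$. The substitution $x=1-s/\log n$ turns it into $(\log n)^{-1}\int_0^{\log n}e^{-s}/\Gamma(1-s/\log n)\,{\rm d}s$; expanding $1/\Gamma(1-s/\log n)=1-\gamma s/\log n+O(s^2/\log^2 n)$ (using $\Gamma'(1)=-\gamma$) and using $\int_0^\infty e^{-s}\,{\rm d}s=\int_0^\infty se^{-s}\,{\rm d}s=1$ gives $\frac{1}{\log n}-\frac{\gamma}{\log^2 n}+O(1/\log^3 n)$, and since $n=j-i$ with $i$ fixed, replacing $\log n$ by $\log j$ affects only lower-order terms, yielding (\ref{hitasy}). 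The same expansion can alternatively be read off by singularity analysis of $z/((1-z)(-\log(1-z)))$ at its dominant singularity $z=1$. I expect this last step to be the main obstacle: one has to control the expansion near $x=1$ (respectively near $z=1$) precisely enough to isolate the constant $\gamma$, and in the singularity-analysis variant one additionally has to treat the borderline case in which the exponent of $(1-z)$ equals the integer~$1$.
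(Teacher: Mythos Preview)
Your argument is correct and, for several parts, follows a genuinely different path from the paper. For the generating function (\ref{hitgen}) the paper starts from the Green-matrix relation $h(i,j)=j\int_0^\infty p_{ij}(t)\,{\rm d}t$, inserts the pgf (\ref{trans1}), and evaluates the resulting integral via the two substitutions $x=e^{-t}$ and $y=1-(1-z)^x$; you instead exploit the random-walk structure of the jump chain from the outset and sum the renewal series $\sum_k\phi(z)^k=1/(1-\phi(z))$, which makes the spatial homogeneity $h(i,j)=h(1,j-i+1)$ the starting point rather than a by-product. For (\ref{hit2}) the paper integrates the explicit one-dimensional marginal (\ref{alphadist}) over $t$, whereas you obtain the same integral by matching coefficients in the identity $\int_0^1(1-z)^{-x}\,{\rm d}x=z/((1-z)(-\log(1-z)))$; this is arguably cleaner. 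For (\ref{hit1a})--(\ref{hit1b}) the two proofs coincide (Green matrix plus the spectral expansion (\ref{trans2})). The most notable difference is (\ref{hitasy}): the paper simply cites Panholzer, while your Laplace-type expansion of $\int_0^1 n^{x-1}/\Gamma(x)\,{\rm d}x$ is self-contained; since $1/\Gamma$ is entire, the Taylor bound $1/\Gamma(1-u)=1-\gamma u+O(u^2)$ holds uniformly on $[0,1]$, so the termwise integration you worry about is indeed justified and the constant $\gamma$ falls out without further difficulty.
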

\begin{remark}
   Concrete values of the hitting probabilities $h(i,j)$ for $i=1$ and
   $j\in\{1,\ldots,7\}$ are provided in the remark after the proof of
   Corollary \ref{hit}.
\end{remark}
We now turn to the block counting process $(N_t^{(n)})_{t\ge 0}$ of the
Bolthausen--Sznitman $n$-coalescent. For $n\in\nz$ and
$i\in\{1,\ldots,n\}$ let $\tau_{ni}:=\inf\{t>0\,:\,N_t^{(n)}\le i\}$
denote the first time the block counting process
$(N_t^{(n)})_{t\ge 0}$ jumps to a state smaller
than or equal to $i$. Note that $\tau_n:=\tau_{n1}$ is the
absorption time of $N^{(n)}$.
\begin{corollary}[Distribution function and asymptotics of ${\mathbf\tau_{ni}}$]
   \label{dist}
   For all $n\in\nz$ and $i\in\{1,\ldots,n\}$, $\tau_{ni}$ has
   distribution function
   \begin{equation} \label{distoftau}
      \pr(\tau_{ni}\le t)
      \ =\ \sum_{j=1}^i (-1)^{n+j}{i\choose j}
      {{je^{-t}-1}\choose {n-1}},
      \qquad t\in (0,\infty).
   \end{equation}
   In particular, for every $i\in\nz$,
   $\tau_{ni}-\log\log n\ \to\ \min(G_1,\ldots,G_i)$
   in distribution as $n\to\infty$, where $G_1,G_2,\ldots$ are
   independent standard Gumbel distributed random variables.
\end{corollary}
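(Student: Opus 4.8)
The plan is to reduce the statement to the tail behaviour of the fixation line and then to extract both assertions from Corollary \ref{trans}. Since the block counting process $(N_t^{(n)})_{t\ge 0}$ is non-increasing with $N_0^{(n)}=n$, the event $\{\tau_{ni}\le t\}$ coincides with $\{N_t^{(n)}\le i\}$, so $\pr(\tau_{ni}\le t)=\pr(N_t^{(n)}\le i)$. The block counting process and the fixation line are Siegmund dual with duality function $1_{\{n\le i\}}$: writing $Q=(q_{nn'})$ and $\Gamma=(\gamma_{ii'})$, a short telescoping computation gives $\sum_{n'\le i}q_{nn'}=\sum_{i'\ge n}\gamma_{ii'}$ (both sides are $0$ for $i\ge n$ and equal $i/(n-i)$ for $i<n$), and since neither process explodes — $N^{(n)}$ trivially, $L^{(i)}$ by the second remark after Corollary \ref{trans} — this yields $\pr(N_t^{(n)}\le i)=\pr(L_t^{(i)}\ge n)$ for all $i,n\in\nz$, $t\ge 0$. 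It thus remains to compute the tail $\pr(L_t^{(i)}\ge n)$ and to analyse its limit as $n\to\infty$.

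For the explicit formula I would use the tail generating function. For any $\nz$-valued random variable $X$ one has $\sum_{n\ge 1}\pr(X\ge n)z^{n-1}=(1-\me(z^X))/(1-z)$ for $|z|<1$, so by the pgf in (\ref{trans1}) and the binomial expansion of $(1-(1-z)^{e^{-t}})^i$,
$$
   \sum_{n\ge 1}\pr(L_t^{(i)}\ge n)\,z^{n-1}
   \ =\ \frac{1-(1-(1-z)^{e^{-t}})^i}{1-z}
   \ =\ \sum_{j=1}^i(-1)^{j+1}{i\choose j}(1-z)^{e^{-t}j-1}.
$$
Reading off the coefficient of $z^{n-1}$ with $[z^{n-1}](1-z)^a=(-1)^{n-1}{a\choose n-1}$ for $a=e^{-t}j-1$ yields $\pr(L_t^{(i)}\ge n)=\sum_{j=1}^i(-1)^{n+j}{i\choose j}{{e^{-t}j-1}\choose{n-1}}$, which is (\ref{distoftau}). (Alternatively one starts from $\pr(L_t^{(i)}\ge n)=1-\sum_{j=1}^{n-1}p_{ij}(t)$, inserts the second expression for $p_{ij}(t)$ in (\ref{trans2}), and uses the partial-sum identity $\sum_{j=0}^{m}(-1)^j{a\choose j}=(-1)^m{a-1\choose m}$.)

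For the asymptotics, fix $x\in\rz$ and put $t_n:=x+\log\log n$, which is positive for all large $n$. Writing $\alpha_n:=e^{-t_n}=e^{-x}/\log n$ and using ${{\alpha_n j-1}\choose{n-1}}=(-1)^{n-1}\prod_{m=1}^{n-1}(1-\alpha_n j/m)$, formula (\ref{distoftau}) becomes
$$
   \pr(\tau_{ni}\le t_n)
   \ =\ \sum_{j=1}^i(-1)^{j+1}{i\choose j}\prod_{m=1}^{n-1}\Big(1-\frac{e^{-x}j}{m\log n}\Big).
$$
Because $0\le e^{-x}j/(m\log n)\le e^{-x}i/\log n\to 0$ uniformly in $m\in\{1,\ldots,n-1\}$, the expansion $\log(1-u)=-u+O(u^2)$ gives $\sum_{m=1}^{n-1}\log(1-e^{-x}j/(m\log n))=-e^{-x}j\,H_{n-1}/\log n+O(1/\log^2 n)$ with $H_{n-1}=\sum_{m=1}^{n-1}1/m=\log n+O(1)$, so each product converges to $e^{-e^{-x}j}$. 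Hence
$$
   \pr(\tau_{ni}\le x+\log\log n)
   \ \longrightarrow\
   \sum_{j=1}^i(-1)^{j+1}{i\choose j}e^{-e^{-x}j}
   \ =\ 1-(1-e^{-e^{-x}})^i
   \ =\ \pr(\min(G_1,\ldots,G_i)\le x),
$$
and since the right-hand side is a continuous distribution function, this is precisely the claimed convergence in distribution.

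No step is a serious obstacle. The two points that need a little care are pinning down the correct form of the duality between $N^{(n)}$ and $L^{(i)}$ — in particular that the non-strict event $\{N_t^{(n)}\le i\}$, rather than $\{N_t^{(n)}<i\}$, is the one matched by $\{L_t^{(i)}\ge n\}$ — and the uniform control of the finite products in the limiting step, for which the essential observation is merely that $\alpha_n\to 0$, so that every factor $1-e^{-x}j/(m\log n)$ stays close to $1$.
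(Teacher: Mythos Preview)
Your proof is correct and follows the same overall architecture as the paper: pass from $\{\tau_{ni}\le t\}=\{N_t^{(n)}\le i\}$ to $\{L_t^{(i)}\ge n\}$ via Siegmund duality, derive the explicit formula from Corollary~\ref{trans}, and then analyse the limit after the shift $t=x+\log\log n$.

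The details differ in two places. For (\ref{distoftau}) the paper sums the second expression for $p_{ij}(t)$ in (\ref{trans2}) over $j\ge n$ and applies the identity $\sum_{j\ge n}(-1)^j{z\choose j}=(-1)^n{z-1\choose n-1}$; you instead use the tail generating function $\sum_{n\ge 1}\pr(L_t^{(i)}\ge n)z^{n-1}=(1-\me(z^{L_t^{(i)}}))/(1-z)$ together with the pgf (\ref{trans1}) and then extract coefficients. Your route is arguably cleaner since it avoids double sums, and you correctly note the paper's route as the natural alternative. For the asymptotic step the paper writes $(-1)^{n-1}{e^{-t}j-1\choose n-1}$ as $\Gamma(n-je^{-x}/\log n)/(\Gamma(n)\Gamma(1-je^{-x}/\log n))$ and invokes Stirling's formula, whereas you expand the binomial as the finite product $\prod_{m=1}^{n-1}(1-je^{-x}/(m\log n))$ and take logarithms, reducing the limit to $H_{n-1}/\log n\to 1$. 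This is more elementary than the gamma-function argument and yields the same error $O(1/\log n)$ en route, though the paper's formulation feeds more directly into the Edgeworth expansion of Corollary~\ref{edgeworth}.

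Your explicit verification of the generator identity $\sum_{n'\le i}q_{nn'}=\sum_{i'\ge n}\gamma_{ii'}$ is a nice addition; the paper simply cites the duality.
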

\begin{remark}
   Note that $\min(G_1,\ldots,G_i)$ has distribution function
   $F_i(x):=1-(1-F(x))^i$, where $F(x):=e^{-e^{-x}}$, $x\in\rz$. For $i=1$
   we recover the well known convergence result (see Goldschmidt and Martin
   \cite[Proposition 3.4]{goldschmidtmartin}, Freund and M\"ohle \cite[Corollary 1.2]{freundmoehle} or
   H\'enard \cite[Theorem 3.9]{henard2}) that the scaled absorption time
   $\tau_n-\log\log n$ is asymptotically
   standard Gumbel distributed.
\end{remark}
The fact that the distribution function (\ref{distoftau}) of $\tau_{ni}$ is
known explicitly can be further exploited. For example, the following
Edgeworth expansion holds.
\begin{corollary}[Edgeworth expansion] \label{edgeworth}
   For every $i\in\nz$ and $x\in\rz$ the following Edgeworth expansion of order
   $K\in\nz_0$ holds.
   \begin{equation} \label{edgeworth1}
      \pr(\tau_{ni}-\log\log n\le x)
      \ =\ \sum_{k=0}^K c_k d_{ki}(x)\frac{e^{-kx}}{\log^k n}
      + O\bigg(\frac{1}{\log^{K+1}n}\bigg),\qquad n\to\infty,
   \end{equation}
   where $c_0,c_1,\ldots$ are the coefficients
   in the series expansion $1/\Gamma(1-x)=\sum_{k=0}^\infty c_kx^k$,
   $|x|<1$, and
   \begin{equation} \label{dki1}
      d_{ki}(x)
      \ := \ \bigg(e^x\frac{{\rm d}}{{\rm d}x}\bigg)^k F_i(x)
      \ = \ \sum_{j=1}^i (F(x))^j (-1)^{j-1}{i\choose j} j^k,
      \qquad k\in\nz_0, i\in\nz, x\in\rz,
   \end{equation}
   with $F_i$ and $F$ as defined in the previous remark.
   Alternatively, $d_{0i}(x)=F_i(x)$ and
   \begin{equation} \label{dki2}
      d_{ki}(x)\ =\
      \sum_{j=1}^k S(k,j) (-1)^{j-1}(i)_j(F(x))^j(1-F(x))^{i-j},
      \qquad k,i\in\nz, x\in\rz,
   \end{equation}
   where the $S(k,j)$ are the Stirling numbers of the second kind and
   $(i)_j:=i(i-1)\cdots(i-j+1)$.
\end{corollary}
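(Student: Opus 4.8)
The plan is to substitute $t = x + \log\log n$ into the explicit distribution function (\ref{distoftau}) and to expand the binomial coefficients as $n\to\infty$. Under this substitution $e^{-t} = e^{-x}/\log n$, so, writing $z_j := je^{-x}/\log n$, we have $je^{-t}-1 = z_j-1$. Expressing the falling factorial through Gamma functions via ${{n-1-z_j}\choose{n-1}} = \Gamma(n-z_j)/(\Gamma(n)\Gamma(1-z_j))$, and using $(-1)^{n+j}(-1)^{n-1} = (-1)^{j-1}$, we get
$$
   (-1)^{n+j}{{z_j-1}\choose{n-1}}\ =\ (-1)^{j-1}\,\frac{\Gamma(n-z_j)}{\Gamma(n)\,\Gamma(1-z_j)}.
$$
The corollary then follows by expanding the two factors $\Gamma(n-z_j)/\Gamma(n)$ and $1/\Gamma(1-z_j)$ separately and summing over $j\in\{1,\ldots,i\}$.

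For the first factor I would invoke Stirling's formula for the Gamma function. Since $z_j = O(1/\log n)\to 0$, a short computation gives $\Gamma(n-z_j)/\Gamma(n) = n^{-z_j}(1+O(1/n))$, uniformly for $j\le i$. The key point is that $z_j\log n = je^{-x}$ is a constant, so $n^{-z_j} = e^{-je^{-x}} = F(x)^j$ with $F(x) = e^{-e^{-x}}$ as in the statement, whence $\Gamma(n-z_j)/\Gamma(n) = F(x)^j + O(1/n)$. For the second factor, $1/\Gamma$ is entire, so near $z=0$ one has $1/\Gamma(1-z) = \sum_{k\ge 0}c_kz^k$ with exactly the coefficients $c_k$ of the statement, and truncation at order $K$ yields
$$
   \frac{1}{\Gamma(1-z_j)}\ =\ \sum_{k=0}^K c_kz_j^k + O(z_j^{K+1})\ =\ \sum_{k=0}^K c_k\,\frac{j^ke^{-kx}}{\log^kn} + O\bigg(\frac{1}{\log^{K+1}n}\bigg).
$$
Multiplying the two expansions (the $O(1/n)$ error is absorbed into $O(1/\log^{K+1}n)$, all sums being finite and $0\le F(x)^j\le 1$) and summing over $j$ with weights ${i\choose j}(-1)^{j-1}$ gives
$$
   \pr(\tau_{ni}-\log\log n\le x)\ =\ \sum_{k=0}^K c_k\,\frac{e^{-kx}}{\log^kn}\sum_{j=1}^i{i\choose j}(-1)^{j-1}j^kF(x)^j\ +\ O\bigg(\frac{1}{\log^{K+1}n}\bigg),
$$
which is the claimed expansion once the inner sum is identified with $d_{ki}(x)$.

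It remains to verify the three representations of $d_{ki}(x)$. Expanding $F_i(x) = 1-(1-F(x))^i = \sum_{j=1}^i{i\choose j}(-1)^{j-1}F(x)^j$ settles the explicit sum in (\ref{dki1}) for $k=0$. For general $k$ the operator $\theta := e^x\,{\rm d}/{\rm d}x$ satisfies $\theta F(x)^j = jF(x)^j$ because $F'(x) = e^{-x}F(x)$; hence $\theta^kF(x)^j = j^kF(x)^j$, and applying $\theta^k$ termwise to $F_i$ yields both $\theta^kF_i(x) = d_{ki}(x)$ and the explicit sum in (\ref{dki1}). Finally the substitution $u = F(x)$ turns $\theta$ into $u\,{\rm d}/{\rm d}u$; combining the classical identity $(u\,{\rm d}/{\rm d}u)^k = \sum_{j=1}^k S(k,j)\,u^j\,{\rm d}^j/{\rm d}u^j$ (for $k\ge 1$) with ${\rm d}^j/{\rm d}u^j(1-(1-u)^i) = (-1)^{j-1}(i)_j(1-u)^{i-j}$ gives (\ref{dki2}), the case $k=0$ being the identity $d_{0i}(x) = F_i(x)$.

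I expect the only real technical point to be the uniform control of the remainder in $\Gamma(n-z_j)/\Gamma(n) = n^{-z_j}(1+O(1/n))$: one must ensure the implied constant is independent of $n$, which holds because $z_j\to 0$ stays bounded, so that this $O(1/n)$ correction is negligible on the relevant $1/\log n$ scale. Everything else amounts to bookkeeping of the finitely many terms indexed by $j\le i$ and $k\le K$ together with the elementary combinatorial identities for $d_{ki}$.
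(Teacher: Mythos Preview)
Your proof is correct and follows essentially the same route as the paper: rewrite the binomial coefficient in (\ref{distoftau}) as $\Gamma(n-z_j)/(\Gamma(n)\Gamma(1-z_j))$, expand $\Gamma(n-z_j)/\Gamma(n)=F(x)^j+O(1/n)$ via Stirling and $1/\Gamma(1-z_j)$ via its Taylor series, then multiply, sum over $j$, and verify the alternative form (\ref{dki2}) through the operator identity $(u\,{\rm d}/{\rm d}u)^k=\sum_{j} S(k,j)u^j\,{\rm d}^j/{\rm d}u^j$. Your treatment of the operator equality in (\ref{dki1}) via $\theta F(x)^j=jF(x)^j$ is a clean touch that the paper leaves implicit.
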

\begin{remarks}
   \begin{enumerate}
      \item[1.] The coefficients $c_k$, $k\in\nz_0$, are related to the
         moments of the Gumbel distribution (see Lemma \ref{coef}). The
         concrete values $c_k$ for $k\le 3$ are provided in the remark
         after the proof of Lemma \ref{coef}.
      \item[2.] For $K=1$ Corollary \ref{edgeworth} reads
         $\pr(\tau_{ni}-\log\log n\le x)=F_i(x)-\gamma F_i'(x)/\log n
         + O(1/\log^2n)$. In particular, for every $x\in\rz$,
         $\pr(\tau_{ni}-\log\log n\le x)-F_i(x)\sim-\gamma F_i'(x)/\log n$
         as $n\to\infty$. Thus, the speed of the convergence of
         $\tau_{ni}-\log\log n$ to $G_i$ is of order $1/\log n$.
   \end{enumerate}
\end{remarks}
\subsection{Asymptotics} \label{asymptotics}
\setcounter{theorem}{0}
We are interested in the behavior of the block counting process
$(N_t^{(n)})_{t\ge 0}$ and the fixation line
$(L_t^{(n)})_{t\ge 0}$ of the Bolthausen--Sznitman
$n$-coalescent as the sample size $n$ tends to infinity. In order to
state the main convergence result (see Theorem \ref{main} below) let us recall
some properties of the Mittag--Leffler process
$X=(X_t)_{t\ge 0}$ and Neveu's \cite{neveu} continuous-state branching
process $Y=(Y_t)_{t\ge 0}$.

The Mittag--Leffler process $X$ is a Markovian process in continuous time
with state space $E:=[0,\infty)$. The name of this process comes from the
fact that for every $t\ge 0$ the marginal random variable $X_t$
is Mittag--Leffler distributed with parameter $e^{-t}$. Note that $X_t$
has moments $\me(X_t^m)=\Gamma(1+m)/\Gamma(1+me^{-t})$, $m\in [0,\infty)$.
The semigroup $(T_t^X)_{t\ge 0}$ of the Mittag--Leffler process $X$ is given
by
\begin{equation} \label{semix}
   T_t^Xf(x)\ =\ \me(f(x^{e^{-t}}X_t)),\qquad t,x\ge 0, f\in B(E),
\end{equation}
where $B(E)$ denotes the set of all bounded measurable functions
$f:E\to\rz$. Some further information on the process $X$ can be found
in \cite{baurbertoin} and \cite{moehlemittag}.

Neveu's \cite{neveu} continuous-state branching process $Y$ is as well a
Markovian process in continuous time with state space $E$. For every
$t\ge 0$ the marginal random variable $Y_t$ is $\alpha$-stable with
Laplace transform $\me(e^{-\lambda Y_t})=e^{-\lambda^\alpha}$,
$\lambda\ge 0$, where $\alpha:=e^{-t}$. The semigroup $(T_t^Y)_{t\ge 0}$
of Neveu's continuous-state branching process $Y$ is given by
\begin{equation} \label{semiy}
   T_t^Yg(y)\ =\ \me(g(y^{e^t}Y_t)),\qquad t,y\ge 0, g\in B(E).
\end{equation}
Note that (see, for example, \cite{moehlemittag}) the Mittag--Leffler
process $X$ is Siegmund dual to Neveu's continuous state branching
process $Y$, i.e. $\pr(X_t\le y\,|\,X_0=x)=\pr(Y_t\ge x\,|\,Y_0=y)$
for all $t,x,y\ge 0$.

Define the
scaled block counting process $X^{(n)}=(X_t^{(n)})_{t\ge 0}$ and the
scaled fixation line $Y^{(n)}:=(Y_t^{(n)})_{t\ge 0}$ of the
Bolthausen--Sznitman $n$-coalescent via
\begin{equation} \label{xy}
   X_t^{(n)}\ :=\ \frac{N_t^{(n)}}{n^{e^{-t}}}
   \quad\mbox{and}\quad
   Y_t^{(n)}\ :=\ \frac{L_t^{(n)}}{n^{e^t}},
   \qquad t\ge 0, n\in\nz.
\end{equation}
Note that, for $n\ge 2$, the processes $X^{(n)}$ and $Y^{(n)}$ are
time-inhomogeneous because of the time-dependent scalings $n^{e^{-t}}$
and $n^{e^t}$. We are now able to state the main convergence result.
The proof of the following theorem is provided in Section \ref{proofs}.
\begin{theorem}[Asymptotics of the block counting process and the fixation line]
\label{main}
\ \\
   For the Bolthausen--Sznit\-man coalescent the following two assertions hold.
   \begin{enumerate}
      \item[a)] As $n\to\infty$ the scaled block counting process
         $X^{(n)}$, defined in (\ref{xy}), converges in
         $D_E[0,\infty)$ to the Mittag--Leffler process $X=(X_t)_{t\ge 0}$.
      \item[b)] As $n\to\infty$ the scaled fixation line
         $Y^{(n)}$, defined in (\ref{xy}), converges in
         $D_E[0,\infty)$ to Neveu's continuous-state
         branching process $Y=(Y_t)_{t\ge 0}$.
   \end{enumerate}
\end{theorem}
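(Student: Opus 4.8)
The plan is to prove both parts by the standard two--step scheme for weak convergence of (time--inhomogeneous) Markov processes: establish convergence of the finite--dimensional distributions, and establish tightness (relative compactness) of the laws in $D_E[0,\infty)$. Since $X^{(n)}$ and $Y^{(n)}$ are pure--jump processes divided by the continuous deterministic functions $n^{e^{-t}}$, $n^{e^{t}}$, they live in $D_E[0,\infty)$; since the one--dimensional laws at $t=0$ are $X_0^{(n)}=Y_0^{(n)}=1$, the time $0$ belongs to the set on which the f.d.d.'s converge, so the two ingredients together yield convergence in the Skorohod topology and identify the limit as the process with semigroup (\ref{semix}), resp.\ (\ref{semiy}). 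Because the scalings $n^{e^{\mp t}}$ make $X^{(n)},Y^{(n)}$ time--inhomogeneous, wherever a time--homogeneous criterion is needed I would apply it to the space--time processes $(t,X_t^{(n)})$ and $(t,Y_t^{(n)})$. I would prove part~b) first and then obtain part~a) from it.

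For the finite--dimensional distributions in b), I would use that $(L_t^{(n)})_{t\ge0}$ is a continuous--time branching process (Corollary~\ref{trans}) with explicit pgf (\ref{trans1}) and that Neveu's process $Y$ is a continuous--state branching process with Laplace exponent $u_t(\lambda)=\lambda^{e^{-t}}$. By the Markov and branching properties it suffices to control, for $0\le s<t$, the conditional Laplace transform $\me(e^{-\lambda Y_t^{(n)}}\mid L_s^{(n)}=\ell)=(1-(1-z_n)^{e^{-(t-s)}})^{\ell}$ with $z_n=\exp(-\lambda n^{-e^{t}})$. Since $1-z_n\sim\lambda n^{-e^{t}}$, one gets $(1-z_n)^{e^{-(t-s)}}\sim\lambda^{e^{-(t-s)}}n^{-e^{s}}=u_{t-s}(\lambda)\,n^{-e^{s}}$ uniformly on compact $\lambda$--sets, so plugging in $\ell$ of order $n^{e^{s}}$ reproduces in the limit exactly one composition step of the maps $u_{t-s}$. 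Iterating over $0=t_0<t_1<\cdots<t_m$ and using dominated convergence (the intermediate states $L_{t_k}^{(n)}$ are random but concentrate on scale $n^{e^{t_k}}$) shows that $(Y_{t_1}^{(n)},\ldots,Y_{t_m}^{(n)})$ converges to $(Y_{t_1},\ldots,Y_{t_m})$, i.e.\ to the CSBP Laplace functional obtained by nesting the $u$'s; in particular $Y_t^{(n)}\Rightarrow Y_t$, the $e^{-t}$--stable law.

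Tightness is the step I expect to be the main obstacle. The offspring law $p_k=1/(k(k-1))$ has infinite mean and the limit is an $\alpha$--stable CSBP, so the usual first--moment bounds on the modulus of continuity are unavailable, and, moreover, the deterministic rescaling contributes an $O(\log n)$ ``drift'' per unit time that is balanced against very frequent small upward jumps of $L^{(n)}$. I would therefore verify a jump--process tightness criterion (Aldous/Ethier--Kurtz) with \emph{truncated} functionals: (i) compact containment, i.e.\ control of $\pr(\sup_{s\le T}Y_s^{(n)}>K)$, via the behaviour of the Laplace transform near the origin together with a maximal inequality; and (ii) smallness of $\me[(Y_{\sigma+\delta}^{(n)}-Y_\sigma^{(n)})\wedge1\mid\mathcal F_\sigma]$ for stopping times $\sigma\le T$ as $\delta\downarrow0$, which follows from the branching/Markov structure and the increment's Laplace transform computed exactly as above (with the non--explosion noted in Remark~2.3~part~2 ensuring no mass escapes). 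A convenient alternative for this step is to route $Y^{(n)}$ through a Lamperti--type representation and use convergence of the underlying spectrally positive input together with continuity of the time change.

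Finally, for part~a) I would transport the result from b) via the Siegmund--type duality between the block counting process and the fixation line, $\pr(N_t^{(n)}\le m)=\pr(L_t^{(m)}\ge n)$: monotonicity of $N^{(n)}$ gives $\pr(X_t^{(n)}\le x)=\pr(L_t^{(m_n)}\ge n)$ with $m_n=\lfloor x\,n^{e^{-t}}\rfloor$, and since $m_n\to\infty$, part~b) applied with initial value $m_n$ yields $\pr(L_t^{(m_n)}\ge n)=\pr(Y_t^{(m_n)}\ge n/m_n^{e^{t}})\to\pr(Y_t\ge x^{-e^{t}}\mid Y_0=1)$, which by the elementary identity between the Mittag--Leffler and the stable laws (matching moments $\Gamma(1+m)/\Gamma(1+me^{-t})$) equals $\pr(X_t\le x\mid X_0=1)$, i.e.\ the Mittag--Leffler distribution function; the analogous computation at several time points gives f.d.d.\ convergence $X^{(n)}\Rightarrow X$, and tightness of $X^{(n)}$ is obtained by rerunning the scheme of the previous paragraph for the block counting process, whose transition probabilities and hitting--time laws are available in closed form from the spectral decomposition of $Q$ in \cite{moehlepitters} (see also Corollary~\ref{dist}). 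A few routine auxiliary estimates---uniform control of the pgf/Laplace maps on compacts and the Stirling--number asymptotics behind the limit laws---would be collected in the appendix.
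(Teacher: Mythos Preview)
Your finite-dimensional-distribution argument for part~b) is essentially the same as Part~1 of the paper's proof: iterate the branching pgf (\ref{trans1}) over the time points, pass to Laplace transforms, and match the recursion with that of $Y$ (the paper's Lemma~\ref{lap}). So that piece is fine.

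The genuine gap is tightness. You correctly flag it as ``the main obstacle'' but do not actually prove it: the Aldous-type criterion and the Lamperti detour are only named, and with an infinite-mean offspring law neither is routine (compact containment for $Y^{(n)}$ via Laplace transforms near $0$ and a maximal inequality is not immediate, and the Lamperti route would require its own convergence/tightness analysis). The paper sidesteps this difficulty entirely. It does \emph{not} use the scheme ``f.d.d.\ $+$ tightness''; instead it passes to the time-homogeneous space--time process $(Y_t^{(n)},t)$ and verifies \emph{uniform} convergence of the semigroups on the test functions $g_{\lambda,\mu}(y,s)=e^{-\lambda y-\mu s}$, which are dense in $\widehat C([0,\infty)^2)$ (Lemma~\ref{dense}). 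Ethier--Kurtz \cite[p.~172, Theorem~2.11]{ethierkurtz} then gives convergence in $D_E[0,\infty)$ directly, without a separate tightness step. The computation is short and explicit: using (\ref{trans1}) and the elementary inequality $(1-e^{-x})^\alpha\ge 1-e^{-x^\alpha}$ (Lemma~\ref{inequality}) one gets the quantitative bound $\sup_{s,y}|U_t^{(n)}\pi_n g_{\lambda,\mu}-\pi_n U_t g_{\lambda,\mu}|\le \lambda^{e^{-t}}/(2n)$. The paper even remarks that its Part~1 (your f.d.d.\ argument) is logically superfluous once Part~2 is in place.

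For part~a) the paper again argues via semigroup convergence for the space--time process, this time controlling the Laplace transform through the explicit ascending factorial moments $\me([N_t^{(n)}]_i)$ and dominated convergence; it does not derive a) from b) by Siegmund duality. Your duality route would give one-dimensional marginals cleanly, but extending it to joint distributions at several times is awkward (duality relates $\{N_t\le i\}$ to $\{L_t\ge n\}$ coordinatewise, not jointly), and you would still owe a tightness proof for $X^{(n)}$. So as written your proposal has the right intuition for the marginals but leaves the Skorohod-level statement unproved; the paper's semigroup approach is what closes that gap.
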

   Theorem \ref{main} demonstrates the intimate relation between the
   Bolthausen--Sznitman coalescent, the Mittag--Leffler process
   and Neveu's continuous state branching process. We refer the reader
   to Bertoin and Le Gall \cite{bertoinlegall} for further insights
   concerning these relations.

   Theorem \ref{main} a) is known from the literature
   \cite[Theorem 1.1]{moehlemittag} and provided here for completeness.
   Our proof of Theorem \ref{main} a) is significantly shorter than
   the proof provided in \cite{moehlemittag} and gives further
   insights into the structure of the scaled block counting process
   $X^{(n)}$.

   Part b) of Theorem \ref{main} is likely to be known from
   branching process theory, however the authors have not been able
   to trace this result in the literature. Note that the offspring
   distribution of the branching process $(L_t^{(n)})_{t\ge 0}$ has pgf
   $f(s)=s+(1-s)\log(1-s)$ and, hence, infinite mean. For related
   convergence results for the critical case when the offspring
   distribution has mean $1$ we refer the reader to
   Sagitov \cite{sagitov1} and the references therein.
   Note that in Theorem 2.1 of \cite{sagitov1} the space-scaling is $n$ and
   an additional time-scaling occurs. Theorem \ref{main} b)
   may be viewed as a kind of boundary case of Theorem 2.1 of
   \cite{sagitov1} for $\alpha\to 1$.
   Similar convergence results for sequences of discrete-time
   branching processes can be traced back to Lamperti \cite{lamperti1,lamperti2}.

\vspace{2mm}

In summary the following commutative diagram holds.
$$
\begin{array}{ccc}
\fbox{
      \begin{tabular}{c}
         Scaled block counting process\\
         $(N_t^{(n)}/n^{e^{-t}})_{t\ge 0}$
      \end{tabular}
}
      & \Rightarrow &
\fbox{
   \begin{tabular}{c}
      Mittag--Leffler process\\
      $(X_t)_{t\ge 0}$
   \end{tabular}
}\\
   & & \\
   \updownarrow & & \updownarrow\\
   & & \\
\fbox{
   \begin{tabular}{c}
      Scaled fixation line process\\
      $(L_t^{(n)}/n^{e^t})_{t\ge 0}$
   \end{tabular}
}
      & \Rightarrow &
\fbox{
   \begin{tabular}{c}
      Neveu's branching process\\
      $(Y_t)_{t\ge 0}$
   \end{tabular}
}
\end{array}
$$
Figure 1: Commutative diagram for the block counting process
$(N_t^{(n)})_{t\ge 0}$ and the fixation line $(L_t^{(n)})_{t\ge 0}$ of the
Bolthausen--Sznitman coalescent. The right-arrows `$\Rightarrow$' stand for
`convergence in $D_E[0,\infty)$ as $n\to\infty$'. The vertical
updown-arrows `$\updownarrow$' stand for `duality', on the left hand side the duality of the
block counting process $(N_t)_{t\ge 0}$ and the fixation line
$(L_t)_{t\ge 0}$ with respect to the Siegmund duality kernel $H:\nz^2\to\{0,1\}$
defined via $H(i,j):=1$ for $i\le j$ and $H(i,j):=0$ otherwise, on the
right hand side the duality of $(X_t)_{t\ge 0}$
and $(Y_t)_{t\ge 0}$
with respect to the Siegmund duality kernel $H:[0,\infty)^2\to\{0,1\}$ defined via
$H(x,y):=1$ for $x\le y$ and $H(x,y):=0$ otherwise.

\vspace{5mm}

We finally point out that
Theorem 3.1 is strongly related to Mehler semigroups, to self-decomposability
and to the Gumbel distribution. Clearly, Theorem \ref{main} can be stated
logarithmically as follows. The process $(\log N_t^{(n)}-e^{-t}\log n)_{t\ge 0}$ converges
in $D_\rz[0,\infty)$ to $\tilde{X}:=(\tilde{X}_t)_{t\ge 0} :=(\log X_t)_{t\ge 0}$
and the process $(\log L_t^{(n)}-e^t\log n)_{t\ge 0}$ converges in
$D_\rz[0,\infty)$ to $\tilde{Y}:=(\tilde{Y}_t)_{t\ge 0}:=(\log Y_t)_{t\ge 0}$
as $n\to\infty$. Note that the semigroup $(T_t^{\tilde{X}})_{t\ge 0}$ of
$\tilde{X}$ is given by
\begin{equation} \label{semigrouplogx}
   T_t^{\tilde{X}}f(x)\ =\ \me(f(xe^{-t}+\tilde{X}_t)),
   \qquad t\ge 0, f\in B(\rz), x\in\rz,
\end{equation}
whereas the semigroup $(T_t^{\tilde{Y}})_{t\ge 0}$ of $\tilde{Y}$ is given by
\begin{equation} \label{semigrouplogy}
   T_t^{\tilde{Y}}g(y)\ =\ \me(g(ye^t+\tilde{Y}_t)),
   \qquad t\ge 0, g\in B(\rz), y\in\rz.
\end{equation}
Semigroups of this form belong to the class of so called Mehler semigroups.
Note that (\ref{semigrouplogx}) and (\ref{semigrouplogy}) define the
semigroups of $\tilde{X}$ and $\tilde{Y}$ completely, since
for every $t\ge 0$ the distributions of the marginals
$\tilde{X}_t=\log X_t$ and $\tilde{Y}_t=\log Y_t$
can be characterized as follows. Let $E$ be standard exponentially
distributed and independent of $X$ and $Y$. Note that $G:=-\log E$ is
standard Gumbel distributed.
From $E\stackrel{d}{=}(E/Y_t)^{e^{-t}}$ (see, for example,
\cite{shanbhagsreehari}) we conclude by an application of the
transformation $x\mapsto -\log x$ that the distribution of
$\tilde{Y}_t$ is characterized via the self-decomposable
distributional equation
$$
G\ \stackrel{d}{=}\ e^{-t}G+e^{-t}\tilde{Y}_t.
$$
Thus, $\tilde{Y}_t$ has characteristic function
$u\mapsto\Gamma(1-iue^t)/\Gamma(1-iu)$, $u\in\rz$, and cumulants
$\kappa_j(\tilde{Y}_t)=(e^{jt}-1)\kappa_j(G)$, $j\in\nz$, $t\ge 0$,
where $\kappa_j(G)$ are the cumulants of the Gumbel distribution,
i.e. $\kappa_1(G)=\gamma$ (Euler--Mascheroni constant) and $\kappa_j(G)=
(-1)^j\Psi^{(j-1)}(1)=(j-1)!\zeta(j)$ for $j\in\nz\setminus\{1\}$,
where $\Psi$ and $\zeta$ denote the digamma function (logarithmic
derivative of the gamma function) and the Riemann zeta function
respectively.

Similarly, the distribution of $\tilde{X}_t$ is characterized via the
self-decomposable distributional equation
$$
S\ \stackrel{d}{=}\ e^{-t}S+\tilde{X}_t,
$$
where $S:=-G$. Therefore, $\tilde{X}_t$ has characteristic function
$u\mapsto \Gamma(1+iu)/\Gamma(1+iue^{-t})$, $u\in\rz$, and
cumulants $\kappa_j(\tilde{X}_t)=(-1)^j(1-e^{-jt})\kappa_j(G)$,
$j\in\nz$, $t\ge 0$.

\subsection{Proofs} \label{proofs}
\setcounter{theorem}{0}
\begin{proof} (of Theorem \ref{fixationtheo})
   Two proofs are provided. The first proof is self-contained and based
   on generating functions. The second proof uses duality and the spectral
   decomposition \cite[Theorem 1.1]{moehlepitters} of the generator of
   the block counting process.

   \vspace{2mm}

   {\bf Proof 1.} (via generating functions)

   The proof is similar to that of Theorem 1.1 of \cite{moehlepitters}.
   Let $D=(d_{ij})_{i,j\in\nz}$ be the diagonal matrix with entries
   $d_{ii}:=-\gamma_i=\gamma_{ii}$, $i\in\nz$. Furthermore, let $R=(r_{ij})_{i,j\in\nz}$
   be the upper right triangular matrix with entries defined for each
   $j\in\nz$ recursively via $r_{jj}:=1$ and
   \begin{equation} \label{rrec}
      r_{ij}\ :=\ \frac{1}{\gamma_i-\gamma_j}\sum_{k=i+1}^j
      \gamma_{ik}r_{kj},\qquad i\in\{j-1,j-2,\ldots,1\}.
   \end{equation}
   Since $\gamma_{ii}=-\gamma_i$, $i\in\nz$, we conclude that
   $r_{ij}\gamma_{jj}=\sum_{k=i}^j \gamma_{ik}r_{kj}$. Thus, the
   entries of $R$ are defined such that $RD=\Gamma R$. Define
   $L:=R^{-1}$. Then, the spectral decomposition $\Gamma=RDL$ holds.
   Moreover, $DL=L\Gamma$ and, hence, $\gamma_{ii}l_{ij}=\sum_{k=i}^j l_{ik}\gamma_{kj}$,
   $i,j\in\nz$. Since $\gamma_{ii}=-\gamma_i$, $i\in\nz$, we obtain for each
   $i\in\nz$ the recursion $l_{ii}=1$ and
   \begin{equation} \label{lrec}
      l_{ij}\ =\ \frac{1}{\gamma_j-\gamma_i}\sum_{k=i}^{j-1}l_{ik}\gamma_{kj},
      \qquad j\in\{i+1,i+2,\ldots\}.
   \end{equation}
   Let $U:=\{z\in\cz:|z|<1\}$ denote the open unit disc.
   For $i\in\nz$ define the generating function $l_i:U\to\cz$
   via $l_i(z):=\sum_{j=i}^\infty l_{ij}z^j$, $z\in U$, and consider
   the modified function $f_i:U\to\cz$ defined via
   $f_i(z):=\sum_{j=i}^\infty (j-i)l_{ij}z^j$, $z\in U$. We have
   $$
   f_i(z)\ =\ \sum_{j=i}^\infty jl_{ij}z^j - i\sum_{j=i}^\infty l_{ij}z^j
   \ =\ zl_i'(z)-il_i(z).
   $$
   On the other hand, by the recursion (\ref{lrec}), we obtain the
   factorization
   \begin{eqnarray*}
      f_i(z)
      & = & \sum_{j=i+1}^\infty (j-i)l_{ij}z^j
      \ = \ \sum_{j=i+1}^\infty \sum_{k=i}^{j-1} l_{ik}\gamma_{kj}z^j\\
      & = & \sum_{k=i}^\infty l_{ik}\sum_{j=k+1}^\infty \gamma_{kj}z^j
      \ = \ \sum_{k=i}^\infty kl_{ik}z^k \sum_{j=k+1}^\infty
            \frac{z^{j-k}}{(j-k)(j-k+1)}\\
      & = & \sum_{k=i}^\infty kl_{ik}z^k \sum_{n=1}^\infty \frac{z^n}{n(n+1)}
      \ = \ zl_i'(z) a(z),
   \end{eqnarray*}
   where the auxiliary function $a:U\to\cz$ is defined via
   $a(z):=\sum_{n=1}^\infty z^n/(n(n+1))=1-(1-z)(-\log(1-z))/z$,
   $z\in U$. Thus, $l_i$ satisfies the differential equation
   $zl_i'(z)a(z)=zl_i'(z)-il_i(z)$ or, equivalently,
   $$
   l_i'(z)\ =\ \frac{il_i(z)}{(1-a(z))z}
   \ =\ \frac{il_i(z)}{(1-z)(-\log(1-z))}.
   $$
   The solution of this homogeneous differential equation with initial
   conditions $l_i(0)=\cdots=l_i^{(i-1)}(0)=0$ and $l_i^{(i)}(0)=i!$ is
   $l_i(z)=(-\log(1-z))^i$, $i\in\nz$, $z\in U$.
   Here $l_i^{(j)}$ denotes the $j$th derivative of $l_i$.
   For $f(z)=\sum_{j=0}^\infty a_jz^j$ let $[z^j]f(z):=a_j$ denote the
   coefficient in front of $z^j$ in the series expansion of $f$.
   By \cite[p.~824]{abramowitzstegun},
   $l_i(z)=(-\log(1-z))^i=i!\sum_{j=i}^\infty |s(j,i)|z^j/j!$
   and, hence,
   $$
   l_{ij}\ =\ [z^j]l_i(z)
   \ =\ \frac{i!}{j!}|s(j,i)|
   \ =\ \frac{i!}{j!}(-1)^{i+j}s(j,i),
   $$
   which is the second formula in (\ref{randl}).
   Let us now turn to the inverse $R=L^{-1}$ of $L$. We have
   $L(z,z^2,\ldots)^\top=(l_1(z),l_2(z),\ldots)^\top$. Multiplying from
   the left with $R$ it follows that $(z,z^2,\ldots)^\top=R(l_1(z),l_2(z),\ldots)^\top$.
   Thus, $z^i=\sum_{j=i}^\infty r_{ij}l_j(z)=\sum_{j=i}^\infty r_{ij}(-\log(1-z))^j$. Replacing $z$ by
   $1-e^{-z}$ leads to
   $(1-e^{-z})^i=\sum_{j=i}^\infty r_{ij}z^j=:r_i(z)$, $i\in\nz$, $z\in U$.
   The calculations between Eq. (2.9) and Eq. (2.10) in \cite{moehlepitters}
   show that $r_i$ has expansion
   $$
   r_i(z)
   \ =\ (1-e^{-z})^i
   \ =\ \sum_{j=0}^\infty (-1)^{i+j} \frac{i!}{j!}S(j,i)z^j,
   $$
   which yields the formula in (\ref{randl}) for the coefficient
   $r_{ij}=[z^j] r_i(z)$ in front of $z^j$.\hfill$\Box$

\vspace{2mm}

{\bf Proof 2.} (via duality)

The duality kernel $H$ can be interpreted as a non-singular matrix
$H=(h_{ij})_{i,j\in\nz}$ with entries $h_{ij}=1$ for $j\ge i$ and
$h_{ij}=0$ for $j<i$. The entries of its inverse $H^{-1}=:
(g_{ij})_{i,j\in\nz}$ are given by $g_{ij}=\delta_{i,j}-\delta_{i+1,j}$.
It is known \cite{moehlepitters} that the generator matrix $Q$ of the
block counting process has spectral decomposition
$Q=\tilde{R}\tilde{D}\tilde{L}$, where the matrices
$\tilde{R}=(\tilde{r}_{ij})_{i,j\in\nz}$,
$\tilde{D}=(\tilde{d}_{ij})_{i,j\in\nz}$ and
$\tilde{L}=(\tilde{l}_{ij})_{i,j\in\nz}$ are given by
$\tilde{r}_{ij}=((j-1)!/(i-1)!)|s(i,j)|$,
$\tilde{d}_{ij}=(i-1)\delta_{i,j}$ and
$\tilde{l}_{ij}=(-1)^{i+j}((j-1)!/(i-1)!)S(i,j)$ respectively.
The entries of $D=(d_{ij})_{i,j\in\nz}$ can be read from the diagonal
of $\Gamma$ and are therefore given by $d_{ij}=i\delta_{i,j}$.
Define the matrices $A=(a_{ij})_{i,j\in\nz}$ and $B=(b_{ij})_{i,j\in\nz}$
by $a_{ij}=\delta_{i+1,j}$ and $b_{ij}=\delta_{i-1,j}$.
Clearly $\tilde{D}=BDA$. This together with the duality relation
$H\Gamma^{\top}=QH$ and the spectral decomposition of the block
counting process $Q=\tilde{R}\tilde{D}\tilde{L}$ yields
\[
\Gamma^{\top}
\ =\ H^{-1}\tilde{R}\tilde{D}\tilde{L}H
\ =\ (-H^{-1}\tilde{R}B)D(-A\tilde{L}H).
\]
Hence $\Gamma=RDL$ with $R:=(-A\tilde{L}H)^{\top}$ and
$L:=(-H^{-1}\tilde{R}B)^{\top}$. It remains to calculate the entries of
$R$ and $L$. Using the recursion $S(i+1,j)=jS(i,j)+S(i,j-1)$ we obtain
\begin{eqnarray*}
   r_{ji}
   & =& (-A\tilde{L}H)_{ij}
   \ =\ -(\tilde{L}H)_{i+1,j}
   \ =\ -\sum_{k=1}^j \tilde{l}_{i+1,k}
   \ =\ \sum_{k=1}^j (-1)^{i+k}\frac{(k-1)!}{i!}S(i+1,k)\\
   & = & \sum_{k=1}^j (-1)^{i+k}\frac{k!}{i!}S(i,k)+\sum_{k=1}^j(-1)^{i+k}\frac{(k-1)!}{i!}S(i,k-1)\\
   & = & \sum_{k=1}^j (-1)^{i+k}\frac{k!}{i!}S(i,k)-\sum_{k=0}^{j-1}(-1)^{i+k}\frac{k!}{i!}S(i,k)
   \ = \ (-1)^{i+j}\frac{j!}{i!}S(i,j).
\end{eqnarray*}
Using the recursion $|s(i+1,j+1)|=|s(i,j)|+i|s(i,j+1)|$ we get
\begin{eqnarray*}
   l_{ji}
   & = & (-H^{-1}\tilde{R}B)_{ij}
   \ = \ -(H^{-1}\tilde{R})_{i,j+1}
   \ = \ \tilde{r}_{i+1,j+1}-\tilde{r}_{i,j+1}\\
   & = & \frac{j!}{i!}|s(i+1,j+1)|-\frac{j!}{(i-1)!}|s(i,j+1)|
   \ =\ \frac{j!}{i!}|s(i,j)|.
\end{eqnarray*}
\hfill$\Box$
\end{proof}
\begin{proof} (of Corollary \ref{trans})
   By Theorem \ref{fixationtheo}, $\Gamma=RDL$, where $R$ and $L=R^{-1}$
   have entries (\ref{randl}). Hence, the transition matrix
   $P(t)=e^{t\Gamma}$ has spectral decomposition
   $P(t)=e^{tRDL}=Re^{tD}L$. Thus,
   $
   p_{ij}(t)=\pr(L_t=j\,|\,L_0=i)=(Re^{tD}L)_{ij}
   =\sum_{k=i}^j r_{ik}e^{-\gamma_kt}l_{kj}$. The first formula in (\ref{trans2})
   for $p_{ij}(t)$ follows from $\gamma_k=k$ and from (\ref{randl}).
   Recall that $\alpha:=e^{-t}$.
   Conditional on $L_0=i$ the random variable $L_t$  has probability generating
   function
   \begin{eqnarray*}
      \me(z^{L_t}\,|\,L_0=i)
      & = & \sum_{j=i}^\infty z^j p_{ij}(t)
      \ = \ \sum_{j=i}^\infty z^j (-1)^{i+j}\frac{i!}{j!}
            \sum_{k=i}^j S(k,i) \alpha^k  s(j,k)\\
      & = & (-1)^i i!\sum_{k=i}^\infty S(k,i)\alpha^k\sum_{j=k}^\infty \frac{(-z)^j}{j!}s(j,k)\\
      & = & (-1)^i i!\sum_{k=i}^\infty S(k,i)\alpha^k \frac{(\log(1-z))^k}{k!}\\
      & = & (-1)^i (e^{\alpha\log(1-z)}-1)^i
      \ = \ (1-(1-z)^\alpha)^i,
      \quad |z|<1, t\ge 0, i\in\nz.
   \end{eqnarray*}
   Expansion leads to
   \begin{eqnarray*}
      \me(z^{L_t}\,|\,L_0=i)
      & = & \sum_{k=0}^i {i\choose k}(-1)^k (1-z)^{\alpha k}
      \ =\ \sum_{k=0}^i {i\choose k}(-1)^k \sum_{j=0}^\infty {{\alpha k}\choose j}(-z)^j\\
      & = & \sum_{j=0}^\infty (-z)^j\sum_{k=0}^i (-1)^k {i\choose k}{{\alpha k}\choose j}.
   \end{eqnarray*}
   The coefficient in front of $z^j$ in this expansion yields
   the second formula for $p_{ij}(t)$.\hfill$\Box$
\end{proof}
\begin{proof} (of Corollary \ref{hit})
   The hitting probability $h(i,j)$ is related to the entry
   $g(i,j):=\int_0^\infty \pr(L_t^{(i)}=j)\,{\rm d}t$ of the Green matrix via
   $h(i,j)=\gamma_j g(i,j)=jg(i,j)$ (see, for example, Norris
   \cite[p.~146]{norris}). Thus, for all $i\in\nz$ and $|z|<1$,
   $$
   h_i(z)
   \ :=\ \sum_{j=i}^\infty h(i,j)z^{j-1}
   \ =\ \int_0^\infty \sum_{j=i}^\infty j\pr(L_t^{(i)}=j)z^{j-1}\,{\rm d}t
   \ =\ \int_0^\infty \frac{{\rm d}}{{\rm d}z}\sum_{j=i}^\infty \pr(L_t^{(i)}=j)z^j\,{\rm d}t.
   $$
   Plugging in the formula (\ref{trans1}) for the pgf of $L_t^{(i)}$ it follows that
   $$
   h_i(z)
   \ =\ \int_0^\infty \frac{{\rm d}}{{\rm d}z}(1-(1-z)^{e^{-t}})^i\,{\rm d}t
   \ =\ \int_0^\infty i(1-(1-z)^{e^{-t}})^{i-1} e^{-t}(1-z)^{e^{-t}-1}\,{\rm d}t.
   $$
   Substituting $x:=e^{-t}$ and noting that ${\rm d}t/{\rm d}x=-1/x$ leads to
   $h_i(z)
   =(1-z)^{-1}\int_0^1 i(1-(1-z)^x)^{i-1}(1-z)^x\,{\rm d}x$.
   Substituting further $y:=1-(1-z)^x$ and noting that ${\rm d}x/{\rm d}y=
   1/((1-y)(-\log(1-z)))$ we obtain
   $$
   h_i(z)\ =\ \frac{1}{(1-z)(-\log(1-z))}\int_0^z iy^{i-1}\,{\rm d}y
   \ =\ \frac{z^i}{(1-z)(-\log(1-z))},\qquad i\in\nz, |z|<1.
   $$
   In particular, $h(i,j)=h(1,j-i+1)$.
   The asymptotic expansion (\ref{hitasy}) follows from
   Panholzer \cite[Eq.~(19)]{panholzer}. Formula (\ref{hit0}) is obtained
   as follows. Let $(J_k)_{k\in\nz_0}$ denote the jump chain of the fixation
   line $(L_t)_{t\ge 0}$. Given this chain is in state $i$ it jumps to state $i+j$ with
   probability $\gamma_{i,i+j}/\gamma_i=1/(j(j+1))=:u_j$, $j\in\nz$. From this
   property it is easily seen that the jump chain has independent increments,
   i.e. $J_0=1$, $J_1=1+\eta_1$, $J_2=1+\eta_1+\eta_2$ and so on,
   where $\eta_1,\eta_2,\ldots$ are iid random variables with distribution
   $\pr(\eta_1=j)=u_j$, $j\in\nz$. For $1\le i<j$ it follows that
   $h(i,j)
   = h(1,j-i+1)
    = \sum_{k=1}^{j-i}\pr(J_k=j-i+1)
    = \sum_{k=1}^{j-i}\pr(\eta_1+\cdots+\eta_k=j-i)$.
   Formula (\ref{hit1a}) for $h(i,j)$
   follows from $h(i,j)=jg(i,j)=j\int_0^\infty\pr(L_t^{(i)}=j)\,{\rm d}t$ and
   \begin{eqnarray*}
      \int_0^\infty \pr(L_t^{(i)}=j)\,{\rm d}t
      & = & \int_0^\infty (-1)^{i+j}\frac{i!}{j!}\sum_{k=i}^j S(k,i)e^{-tk}
            s(j,k)\,{\rm d}t\\
      & = & (-1)^{i+j}\frac{i!}{j!}\sum_{k=i}^j\frac{S(k,i)s(j,k)}{k}.
   \end{eqnarray*}
   Eq.~(\ref{hit1b}) follows from $h(i,j)=h(1,j-i+1)$ and $S(k,1)=1$
   for all $k\in\nz$.
   Moreover, for $i=1$ we have $\pr(L_t=j)=\alpha\Gamma(j-\alpha)/(j!\Gamma(1-\alpha))$ with
   $\alpha:=e^{-t}$. Thus,
   $$
   g(1,j)\ =\ \int_0^\infty \pr(L_t=j)\,{\rm d}t
   \ =\ \frac{1}{j!}\int_0^1 \frac{\Gamma(j-\alpha)}{\Gamma(1-\alpha)}\,{\rm d}\alpha
   \ =\ \frac{1}{j!}\int_0^1 \frac{\Gamma(j-1+x)}{\Gamma(x)}\,{\rm d}x
   $$
   and, hence, we obtain the integral representation
   $$
   h(i,j)
   \ =\ h(1,j-i+1)
   \ =\ (j-i+1)g(1,j-i+1)
   \ =\ \frac{1}{(j-i)!}\int_0^1 \frac{\Gamma(j-i+x)}{\Gamma(x)}\,{\rm d}x,
   \quad 1\le i\le j.
   $$
   The last formula for $h(i,j)$ in (\ref{hit2}) follows from
   $\Gamma(n+x)/\Gamma(x)=\sum_{k=0}^n |s(n,k)|x^k$, $n\in\nz_0$, $x\in\rz$.
   The proof of Corollary \ref{hit} is complete.\hfill$\Box$
\end{proof}
\begin{remark}
%
%
   Note that $\pr(\eta_1+\cdots+\eta_k=j-i)=
   \sum_{i_1,\ldots,i_k} u_{i_1}\cdots u_{i_k}$,
   where the sum extends over all $i_1,\ldots,i_k\in\nz$ satisfying
   $i_1+\cdots+i_k=j-i$.
   Hence, concrete values of the hitting probabilities are
   $h(1,1)=1$, $h(1,2)=\pr(\eta_1=1)=u_1=1/2$,
   $h(1,3)=\pr(\eta_1=2)+\pr(\eta_1+\eta_2=2)=u_2+u_1^2=1/6+1/4=
   5/12\approx 0.41667$,
   $h(1,4)=\pr(\eta_1=3)+\pr(\eta_1+\eta_2=3)+\pr(\eta_1+\eta_2+\eta_3=3)
   =u_3+2u_1u_2+u_1^3=1/12+1/6+1/8=3/8=0.375$,
   $h(1,5)
   =u_4+(2u_1u_3+u_2^2)+3u_1^2u_2
   =1/20+1/9+1/8
   =251/720\approx 0.34861$,
   $h(1,6)=95/288\approx 0.32986$,
   $h(1,7)=19087/60480\approx 0.31559$
   and so on.
\end{remark}
\begin{proof} (of Corollary \ref{dist})
   By the definition of $\tau_{ni}$ and the duality of
   $(N_t)_{t\ge 0}$ and $(L_t)_{t\ge 0}$ we have
   $\pr(\tau_{ni}\le t)=\pr(N_t^{(n)}\le i)=\pr(L_t^{(i)}\ge n)
   =\sum_{j=n}^\infty p_{ij}(t)$. Using the second formula for
   $p_{ij}(t)$ in (\ref{trans2}) yields
   \begin{eqnarray*}
      \pr(\tau_{ni}\le t)
      & = & \sum_{j=n}^\infty (-1)^j\sum_{k=1}^i (-1)^k
            {i\choose k}{{e^{-t}k}\choose j}\\
      & = & \sum_{k=1}^i (-1)^k {i\choose k}\sum_{j=n}^\infty (-1)^j{{e^{-t}k}\choose j}\\
      & = & \sum_{k=1}^i (-1)^k {i\choose k} (-1)^n {{e^{-t}k-1}\choose{n-1}},
   \end{eqnarray*}
   where the last equality holds since
   $\sum_{j=n}^\infty (-1)^j{z\choose j}=(-1)^n{{z-1}\choose{n-1}}$ for all
   $n\in\nz$ and all $z\in\rz$.

   Fix $x\in\rz$ and define $F(x):=e^{-e^{-x}}$ for convenience. Assume that
   $n$ is sufficiently large such that
   $x+\log\log n>0$. Choosing $t:=x+\log\log n$
   and noting that for all sufficiently large $n$
   \begin{eqnarray*}
      (-1)^{n-1} {{e^{-t}k-1}\choose{n-1}}
      & = & \frac{\Gamma(n-ke^{-x}/\log n)}{\Gamma(n)\Gamma(1-ke^{-x}/\log n)}\\
      & \sim & \frac{\Gamma(n-ke^{-x}/\log n)}{\Gamma(n)}
      \ \to\ e^{-ke^{-x}}\ =\ (F(x))^k
   \end{eqnarray*}
   as $n\to\infty$ by an application of Stirling's formula
   $\Gamma(n+1)\sim (n/e)^n\sqrt{2\pi n}$ as $n\to\infty$, it follows that
   \begin{eqnarray*}
      \pr(\tau_{ni}-\log\log n\le x)
      & = & \pr(\tau_{ni}\le x+\log\log n)\\
      & \to & \sum_{k=1}^i (-1)^{k-1}{i\choose k} (F(x))^k
      \ =\ 1-(1-F(x))^i,\qquad n\to\infty.
   \end{eqnarray*}
   It remains to note that $x\mapsto 1-(1-F(x))^i$, $x\in\rz$,
   is the distribution function of the minimum of $i$
   standard Gumbel distributed random variables.\hfill$\Box$
\end{proof}
Before we will prove Corollary \ref{edgeworth} we provide the Taylor
expansion of the map $x\mapsto 1/\Gamma(1-x)$.
\begin{lemma} \label{coef}
   The map $x\mapsto 1/\Gamma(1-x)$ has Taylor expansion
   $1/\Gamma(1-x)=\sum_{k=0}^\infty c_kx^k$, $|x|<1$, where the coefficients
   $c_0,c_1,\ldots$ are related to the moments $m_k=(-1)^k\Gamma^{(k)}(1)$,
   $k\in\nz_0$, of the Gumbel distribution via $c_0=m_0=1$ and
   \begin{equation} \label{ck}
      c_k\ =\
      \sum_{j=1}^k (-1)^j\sum_{{k_1,\ldots,k_j\in\nz}\atop{k_1+\cdots+k_j=k}}
      \frac{m_{k_1}\cdots m_{k_j}}{k_1!\cdots k_j!},\qquad k\in\nz.
   \end{equation}
   Alternatively,
   \begin{equation} \label{ckalt}
      c_k\ =\ \frac{(-1)^k}{k!}\sum_{l=1}^k (-1)^l {{k+1}\choose{l+1}}(\Gamma^l)^{(k)}(1)
      \qquad k\in\nz,
   \end{equation}
   where $(\Gamma^l)^{(k)}$ denotes the $k$th derivative of
   the $l$th power of $\Gamma$.
\end{lemma}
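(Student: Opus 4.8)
The plan is to expand $1/\Gamma(1-x)$ directly as a geometric series in the remainder $u(x):=\Gamma(1-x)-1$ and read off the coefficients. As preliminaries I would record two facts. First, $1/\Gamma$ is an entire function (Weierstrass product), so $x\mapsto 1/\Gamma(1-x)$ is entire and its Taylor series about the origin converges for every $x\in\cz$, in particular for $|x|<1$; it therefore suffices to identify the coefficients, and all the power series manipulations below are legitimate. Second, since $G:=-\log E$ (with $E$ standard exponentially distributed) has moment generating function $\me(e^{tG})=\me(E^{-t})=\Gamma(1-t)$ for $t<1$, the Gumbel moments are $m_k=(-1)^k\Gamma^{(k)}(1)$ and $\Gamma(1-x)=\sum_{k\ge0}(m_k/k!)x^k=1+u(x)$ with $u(x)=\sum_{k\ge1}(m_k/k!)x^k=O(x)$.

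For (\ref{ck}), on a neighbourhood of $0$ where $|u(x)|<1$ I would write $1/\Gamma(1-x)=1/(1+u(x))=\sum_{j\ge0}(-1)^ju(x)^j$. The coefficient of $x^k$ in $u(x)^j$ equals $\sum_{k_1+\cdots+k_j=k}m_{k_1}\cdots m_{k_j}/(k_1!\cdots k_j!)$, the sum running over $k_1,\ldots,k_j\in\nz$; since $u(x)^j=O(x^j)$, for fixed $k\ge1$ only the terms with $1\le j\le k$ contribute, while for $k=0$ only $j=0$ contributes, giving $c_0=m_0=1$. Extracting the coefficient of $x^k$ term by term --- justified because the series converges uniformly near $0$ --- yields (\ref{ck}), and uniqueness of Taylor coefficients extends the identity to $|x|<1$.

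For the alternative formula (\ref{ckalt}) the idea is to expand the powers of $u$ in the opposite direction. From $-u(x)=1-\Gamma(1-x)$ and the binomial theorem, $(-u(x))^j=\sum_{l=0}^j{j\choose l}(-1)^l\Gamma(1-x)^l$. Only the finitely many $j$ with $0\le j\le k$ affect the coefficient of $x^k$, so interchanging the (now finite) sums causes no difficulty:
$$
c_k\ =\ \sum_{j=0}^k[x^k](1-\Gamma(1-x))^j\ =\ \sum_{l=0}^k(-1)^l\Big(\sum_{j=l}^k{j\choose l}\Big)\,[x^k]\Gamma(1-x)^l .
$$
Now the hockey-stick identity gives $\sum_{j=l}^k{j\choose l}={{k+1}\choose{l+1}}$, while the chain rule gives $({\rm d}^k/{\rm d}x^k)\,\Gamma(1-x)^l\big|_{x=0}=(-1)^k(\Gamma^l)^{(k)}(1)$, hence $[x^k]\Gamma(1-x)^l=(-1)^k(\Gamma^l)^{(k)}(1)/k!$; the $l=0$ term drops out for $k\ge1$ because $\Gamma^0\equiv1$. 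Substituting these two facts into the display produces (\ref{ckalt}).

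I expect the only step requiring an actual idea to be the binomial rewriting of $(-u)^j$ as a combination of powers of $\Gamma(1-x)$; everything after that is the hockey-stick identity and a chain-rule computation. An alternative would be to deduce (\ref{ckalt}) from (\ref{ck}) directly, but then the obstacle shifts to proving the combinatorial identity $\sum_{l=j}^k(-1)^l{{k+1}\choose{l+1}}{l\choose j}=(-1)^j$ for $1\le j\le k$; this follows from the substitution $p=l+1$, the vanishing of $\sum_{p=0}^{k+1}(-1)^p{{k+1}\choose p}P(p)$ for any polynomial $P$ of degree below $k+1$ applied to $P(p)={{p-1}\choose j}$, and the value ${{-1}\choose j}=(-1)^j$ --- so the route above is the cleaner one.
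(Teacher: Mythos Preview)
Your proof is correct and follows essentially the same route as the paper: geometric series in $1-\Gamma(1-x)$ for (\ref{ck}), then binomial expansion of $(1-\Gamma(1-x))^j$ combined with the hockey-stick identity $\sum_{j=l}^k{j\choose l}={k+1\choose l+1}$ for (\ref{ckalt}). The only difference is cosmetic: the paper passes from (\ref{ck}) to (\ref{ckalt}) by quoting an external lemma (Lemma~1 of \cite{moehlelooking}) for the identity $\sum_{k_1+\cdots+k_j=k}\Gamma^{(k_1)}(1)\cdots\Gamma^{(k_j)}(1)/(k_1!\cdots k_j!)=\frac{1}{k!}\sum_{l}(-1)^{j-l}{j\choose l}(\Gamma^l)^{(k)}(1)$, whereas you obtain the same thing directly by the binomial theorem and the chain rule --- your derivation is therefore self-contained.
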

\begin{remark}
   Concrete values are
   $c_1=-m_1=-\gamma\approx-0.577216$,
   $c_2=m_1^2-m_2/2=\gamma^2-(\gamma^2+\zeta(2))/2=\gamma^2/2-\pi^2/12\approx -0.655878$,
   $c_3=-m_3/6+m_1m_2-m_1^3=\gamma\zeta(2)/2-\zeta(3)/3-\gamma^3/6=
   \pi^2\gamma/12-\zeta(3)/3-\gamma^3/6\approx 0.042003$ and so on.
\end{remark}
\begin{proof}
   A Gumbel distributed random variable $\tau$ has moment generating
   function $\me(e^{x\tau})=\Gamma(1-x)$, $x<1$. Thus, the map
   $x\mapsto\Gamma(1-x)$ has Taylor expansion $\Gamma(1-x)=\sum_{k=0}^\infty
   a_k x^k$, $|x|<1$,where $a_k:=m_k/k!$ and $m_k=\me(\tau^k)$, $k\in\nz_0$,
   are the moments of the Gumbel distribution.
   For the reciprocal map $1/\Gamma(1-x)$ it follows that
   \begin{eqnarray*}
      \frac{1}{\Gamma(1-x)}
      & = & \sum_{j=0}^\infty (1-\Gamma(1-x))^j
      \ = \ \sum_{j=0}^\infty \bigg(\sum_{k=1}^\infty -a_kx^k\bigg)^j\\
      & = & 1 + \sum_{j=1}^\infty \sum_{k_1,\ldots,k_j\in\nz}
            (-a_{k_1})\cdots(-a_{k_j}) x^{k_1+\cdots+k_j}\\
      & = & 1 + \sum_{j=1}^\infty (-1)^j\sum_{k=1}^\infty x^k
            \sum_{{k_1,\ldots,k_j\in\nz}\atop{k_1+\cdots+k_j=k}}
            a_{k_1}\cdots a_{k_j}
      \ = \ \sum_{k=0}^\infty c_k x^k
   \end{eqnarray*}
   with $c_0:=1$ and $c_k$, $k\in\nz$, as given in (\ref{ck}),
   since $a_k=m_k/k!$, $k\in\nz_0$. Since $m_k=(-1)^k\Gamma^{(k)}(1)$,
   (\ref{ck}) can be rewritten as
   \begin{eqnarray*}
      c_k
      & = & \sum_{j=1}^k (-1)^{j+k}
            \sum_{{k_1,\ldots,k_j\in\nz}\atop{k_1+\cdots+k_j=k}}
            \frac{\Gamma^{(k_1)}(1)\cdots \Gamma^{(k_j)}(1)}{k_1!\cdots k_j!}\\
      & = & \sum_{j=1}^k \frac{(-1)^{j+k}}{k!}
            \sum_{l=1}^j (-1)^{j-l}{j\choose l} (\Gamma^l)^{(k)}(1),
            \qquad k\in\nz,
   \end{eqnarray*}
   where the last equality holds by Lemma 1 in the appendix of
   \cite{moehlelooking}. Interchanging the sums and noting
   that $\sum_{j=l}^k {j\choose l}={{k+1}\choose{l+1}}$ yields (\ref{ckalt}).\hfill$\Box$
\end{proof}
\begin{proof} (of Corollary \ref{edgeworth})
   Fix $x\in\rz$ and define $F(x):=e^{-e^{-x}}$.
   By Corollary \ref{dist}, for all sufficiently large $n$,
   \begin{equation} \label{edge1}
      \pr(\tau_{ni}-\log\log n\le x)
      \ =\ \sum_{j=1}^i (-1)^{j-1}{i\choose j}
      \frac{\Gamma(n-je^{-x}/\log n)}{\Gamma(n)\Gamma(1-je^{-x}/\log n)}.
   \end{equation}
   For every $c\in\rz$ it is easily checked that $\Gamma(n+c/\log n)/\Gamma(n)
   =e^c+O(1/(n\log n))$ as $n\to\infty$. For $c=-je^{-x}$ we obtain
   \begin{equation} \label{edge2}
      \frac{\Gamma(n-je^{-x}/\log n)}{\Gamma(n)}
      \ =\ (F(x))^j + O\bigg(\frac{1}{n\log n}\bigg).
   \end{equation}
   Moreover (see Lemma \ref{coef}), from $1/\Gamma(1-x)=
   \sum_{k=0}^\infty c_kx^k$ we conclude that, for all $K\in\nz_0$,
   \begin{equation} \label{edge3}
      \frac{1}{\Gamma(1-je^{-x}/\log n)}
      \ =\ \sum_{k=0}^K c_k\bigg(\frac{je^{-x}}{\log n}\bigg)^k
      + O\bigg(\frac{1}{(\log n)^{K+1}}\bigg).
   \end{equation}
   Multiplying (\ref{edge2}) with (\ref{edge3}) yields
   $$
   \frac{\Gamma(n-je^{-x}/\log n)}{\Gamma(n)\Gamma(1-je^{-x}/\log n)}
   \ =\ (F(x))^j\sum_{k=0}^K c_k\bigg(\frac{je^{-x}}{\log n}\bigg)^k
   + O\bigg(\frac{1}{(\log n)^{K+1}}\bigg).
   $$
   Plugging this expansion into (\ref{edge1}) and exchanging the sums yields
   $$
   \pr(\tau_{ni}-\log\log n\le x)
   \ =\ \sum_{k=0}^K c_k\bigg(\frac{e^{-x}}{\log n}\bigg)^k
        \sum_{j=1}^i (F(x))^j(-1)^{j-1}{i\choose j} j^k
        + O\bigg(\frac{1}{(\log n)^{K+1}}\bigg),
   $$
   which is the desired Edgeworth expansion
   with coefficients $d_{ki}(x)$ as defined in (\ref{dki1}). It remains
   to verify the alternative representation (\ref{dki2}) of the coefficients
   $d_{ki}(x)$. It is readily checked by induction on $k\in\nz_0$
   that $(t\frac{\partial}{\partial t})^k f(t)
   =\sum_{j=0}^k S(k,j)t^jf^{(j)}(t)$ for every $k$-times differentiable
   function $f:\rz\to\rz$, where the $S(k,j)$ denote the Stirling numbers of
   the second kind. Applying this formula to $f(t):=1-(1-t)^i$ with $i\in\nz$
   it follows for all $k\in\nz_0$ and $t\in\rz$ that
   \begin{eqnarray*}
      \sum_{j=1}^i (-1)^{j-1}{i\choose j}j^kt^j
      & = & \bigg(t\frac{\partial}{\partial t}\bigg)^k\sum_{j=1}^i (-1)^{j-1}{i\choose j}t^j
      \ = \ \bigg(t\frac{\partial}{\partial t}\bigg)^k (1-(1-t)^i)\\
      & = & \sum_{j=0}^k S(k,j) t^j\bigg(\frac{\partial}{\partial t}\bigg)^j(1-(1-t)^i)\\
      & = & S(k,0)(1-(1-t)^i) + \sum_{j=1}^k S(k,j)t^j(-1)^{j-1}
      (i)_j(1-t)^{i-j},
   \end{eqnarray*}
   where $(i)_j:=i(i-1)\cdots(i-j+1)$.
   Replacing $t$ by $F(x)$ and noting that $S(k,0)=0$ for $k\in\nz$
   shows that (\ref{dki1}) coincides for $k\in\nz$ with (\ref{dki2}).
   \hfill$\Box$
\end{proof}
\begin{proof} (of Theorem \ref{main} a))
   Let $Z^{(n)}:=(X_t^{(n)},t)_{t\ge 0}$ and $Z:=(X_t,t)_{t\ge 0}$ denote
   the space-time processes of $X^{(n)}=(X_t^{(n)})_{t\ge 0}$ and
   $X=(X_t)_{t\ge 0}$ respectively. Note that $Z^{(n)}$ has state space
   $S_n:=\{(j/n^{e^{-t}},t)\,:\,j\in\{1,\ldots,n\},t\ge 0\}=
   \bigcup_{t\ge 0} (E_{n,t}\times\{t\})$, where $E_{n,t}:=
   \{j/n^{e^{-t}}\,:\,j\in\{1,\ldots,n\}\}$, and that $Z$
   has state space $S:=E\times [0,\infty)=[0,\infty)^2$. The processes
   $Z^{(n)}$ and $Z$ are time-homogeneous (see, for example,
   Revuz and Yor \cite[p.~85, Exercise (1.10)]{revuzyor}).
   In the following it is shown that $Z^{(n)}$ converges in $D_S[0,\infty)$
   to $Z$ as $n\to\infty$. Note that this convergence implies the desired
   convergence of $X^{(n)}$ in $D_E[0,\infty)$ to $X$ as $n\to\infty$.
   Define $\pi_n:B(S)\to B(S_n)$ via
   $\pi_nf(x,s):=f(x,s)$ for all $f\in B(S)$ and $(x,s)\in S_n$.
   By Proposition \ref{laplaceprop} it suffices to verify that, for every
   $t\ge 0$ and $\lambda,\mu>0$,
   $$
   \lim_{n\to\infty} \sup_{s\ge 0}\sup_{x\in E_{n,s}}
   |T_t^{(n)}\pi_n f_{\lambda,\mu}(x,s)-\pi_nT_t f_{\lambda,\mu}(x,s)|\ =\ 0,
   $$
   where $(T_t^{(n)})_{t\ge 0}$ and $(T_t)_{t\ge 0}$ denote the semigroups
   of the space-time processes $Z^{(n)}$ and $Z$ respectively
   and the test functions $f_{\lambda,\mu}:S\to\rz$ are defined via
   $f_{\lambda,\mu}(x,s):=e^{-\lambda x-\mu s}$ for all $(x,s)\in S$.
   Fix $t\ge 0$ and $\lambda,\mu>0$.
   For convenience, define $\alpha:=e^{-t}$ and $\beta:=e^{-s}$.
   We have
   \begin{eqnarray*}
      T_t^{(n)}\pi_n f_{\lambda,\mu}(x,s)
      & = & \me(f_{\lambda,\mu}(X_{s+t}^{(n)},s+t)\,|\,X_s^{(n)}=x)\\
      & = & (\alpha\beta)^\mu\me(\exp(-\lambda/n^{\alpha\beta} N_{s+t}^{(n)})\,|\,N_s^{(n)}=xn^\beta)\\
      & = & (\alpha\beta)^\mu\me(\exp(-\lambda/n^{\alpha\beta} N_t^{(xn^\beta)})),
            \qquad (x,s)\in S_n,
   \end{eqnarray*}
   and
   \begin{eqnarray*}
      \pi_nT_tf_{\lambda,\mu}(x,s)
      & = & \me(f_{\lambda,\mu}(X_{s+t},s+t)\,|\,X_s=x)\\
      & = & (\alpha\beta)^\mu\me(\exp(-\lambda X_{s+t})\,|\,X_s=x)\\
      & = & (\alpha\beta)^\mu\me(\exp(-\lambda x^\alpha X_t)),
      \qquad (x,s)\in S.
   \end{eqnarray*}
   Thus, we have to verify that
   $$
   \lim_{n\to\infty}\sup_{s\ge 0}\sup_{x\in E_{n,s}}
   (\alpha\beta)^\mu|\me(\exp(-\lambda/n^{\alpha\beta}N_t^{(xn^\beta)}  ))
   -\me(\exp(-\lambda x^\alpha X_t))|
   \ =\ 0.
   $$
   Since both expectations are bounded between $0$ and $1$ and since
   $(\alpha\beta)^\mu=e^{-\mu(s+t)}$ tends
   to $0$ as $s\to\infty$ it suffices to verify that, for every $s_0>0$,
   $$
   \lim_{n\to\infty}\sup_{s\in [0,s_0]}\sup_{x\in E_{n,s}}
   |\me(\exp(-\lambda/n^{\alpha\beta}N_t^{(xn^\beta)}  ))-\me(\exp(-\lambda x^\alpha X_t))|
   \ =\ 0.
   $$
   We will even verify that
   $$
   \lim_{n\to\infty}
   \sup_{s\in [0,s_0]}\sup_{x\ge 0}
   |\me(\exp(-\lambda/n^{\alpha\beta} N_t^{(\lfloor xn^\beta\rfloor)}    ))
   -\me(\exp(-\lambda x^\alpha X_t))|
   \ =\ 0.
   $$
   The difference of the two expectations depends on $n$ and $s$ only via
   $n^\beta=n^{e^{-s}}$. Since the map
   $s\mapsto n^{e^{-s}}$ is non-increasing it follows that the
   convergence for fixed $s\in [0,s_0]$ is slower
   as $s$ is larger. So the slowest convergence holds at the right
   end point $s=s_0$. Thus, it suffices to verify that, for every
   $s\ge 0$,
   $$
   \lim_{n\to\infty} \sup_{x\ge 0}
   |\me(\exp(-\lambda/n^{\alpha\beta}N_t^{(\lfloor xn^\beta\rfloor)}))
   -\me(\exp(-\lambda x^\alpha X_t))|
   \ =\ 0.
   $$
   The map $x\mapsto\me(\exp(-\lambda x^\alpha X_t))$ is bounded, continuous,
   and non-increasing. Moreover, for every $n$ the map $x\mapsto
   \me(\exp(-\lambda/n^{\alpha\beta}N_t^{(\lfloor xn^\beta\rfloor)}))$ is
   non-increasing. Thus, by the theorem of P\'olya, it suffices to verify that, for
   every $s\ge 0$ and $x\ge 0$,
   $$
   \lim_{n\to\infty}
   \me(\exp(-\lambda/n^{\alpha\beta} N_t^{(\lfloor xn^\beta\rfloor)}))
   \ =\ \me(\exp(-\lambda x^\alpha X_t)).
   $$
   Note that we have reduced the problem to verify the convergence
   uniformly for all $s\ge 0$ and $x\in E_{n,s}$ to the problem to
   verify the convergence pointwise for all points $(s,x)\in [0,\infty)^2$.

  Define $\tau:=n^\beta$. Using this notation it remains to verify that
   \begin{equation} \label{tobeshown}
   \lim_{\tau\to\infty}
   \me(\exp(-\lambda/\tau^\alpha N_t^{(\lfloor x\tau\rfloor)}))
   \ =\ \me(\exp(-\lambda x^\alpha X_t)).
   \end{equation}
   We have
   \begin{eqnarray*}
      \me(\exp(-\lambda/\tau^\alpha N_t^{(\lfloor x\tau\rfloor)}))
      & = & \sum_{m=0}^\infty \frac{(-\lambda)^m}{m!}
            \frac{\me((N_t^{(\lfloor x\tau\rfloor)})^m)}
            {\tau^{\alpha m}}.
   \end{eqnarray*}
   Note that the series on the right hand side is absolutely convergent,
   since $N_t^{(\lfloor x\tau\rfloor)}\le x\tau$ and, hence,
   $\me((N_t^{(\lfloor x\tau\rfloor)})^m)\le (x\tau)^m$.
   Applying the formula $z^m=\sum_{i=0}^m (-1)^{m-i}S(m,i)[z]_i$,
   $m\in\nz_0$, $z>0$, where $[z]_i:=\Gamma(z+i)/\Gamma(z)$ for $z,i>0$,
   it follows that
   \begin{eqnarray*}
      \frac{\me((N_t^{(\lfloor x\tau\rfloor)})^m)}{\tau^{\alpha m}}
      & = & \sum_{i=0}^m (-1)^{m-i}S(m,i)\frac{\me([N_t^{(\lfloor x\tau\rfloor)}]_i)}{\tau^{\alpha m}}
      \ = \ \sum_{i=0}^m (-1)^{m-i}S(m,i)\me(X_t^i)\frac{[\lfloor x\tau\rfloor]_{\alpha i}}{\tau^{\alpha m}}
   \end{eqnarray*}
   by Lemma 3.1 of \cite{moehlemittag}. From $[\lfloor x\tau\rfloor]_{\alpha i}
   \sim (x\tau)^{\alpha i}=x^{\alpha i}\tau^{\alpha i}$
   as $\tau\to\infty$ we conclude that
   only the summand $i=m$ yields asymptotically a non-zero contribution
   and it follows that
   $$
   \lim_{\tau\to\infty}
   \frac{\me((N_t^{(\lfloor x\tau\rfloor)})^m)} {\tau^{\alpha m}}
   \ =\ \me(X_t^m) x^{\alpha m}
   \ =\ \me((x^\alpha X_t)^m).
   $$
   Moreover,
   $$
   \frac{\me((N_t^{(\lfloor x\tau\rfloor)})^m)}{\tau^{\alpha m}}
   \ \le\ \frac{\me([N_t^{(\lfloor x\tau\rfloor)}]_m)}{\tau^{\alpha m}}
   \ =\ \me(X_t^m)
        \frac{[\lfloor x\tau\rfloor]_{\alpha m}}{\tau^{\alpha m}}
   \ \le\ \me(X_t^m)\frac{[x\tau]_{\alpha m}}{\tau^{\alpha m}}.
   $$
   It is readily checked that the map $\tau\mapsto [x\tau]_{\alpha m}/\tau^{\alpha m}$
   is non-increasing in $\tau$. Thus, we obtain the upper bound
   $$
   \frac{\me((N_t^{(\lfloor x\tau\rfloor)})^m)}{\tau^{\alpha m}}
   \ \le\ \me(X_t^m)\frac{[x\tau_0]_{\alpha m}}{\tau_0^{\alpha m}}
   \quad\mbox{for all $\tau\ge\tau_0$.}
   $$
   Note that
   $$
   \frac{\lambda^m}{m!}\me(X_t^m)\frac{[x\tau_0]_{\alpha m}}{\tau_0^{\alpha m}}
   \ =\ \frac{\lambda^m}{m!}\frac{m!}{\Gamma(1+\alpha m)}
   \frac{\Gamma(x\tau_0+\alpha m)}{\tau_0^{\alpha m}\Gamma(x\tau_0)}
   \ \sim\ \bigg(\frac{\lambda}{\tau_0^\alpha}\bigg)^m (\alpha m)^{x\tau_0-1}
   $$
   as $m\to\infty$. Thus, if we choose $\tau_0$ sufficiently large such
   that $\lambda/\tau_0^\alpha<1$, for example $\tau_0:=(2\lambda)^{1/\alpha}$,
   then the dominating map
   $m\mapsto(\lambda^m/m!)\me(X_t^m)[x\tau_0]_{\alpha m}/\tau_0^{\alpha m}$
   is integrable with respect to the counting measure on $\nz$.
   Thus, it is allowed to apply the dominated convergence theorem, which yields
   $$
   \lim_{\tau\to\infty}\me(\exp(-\lambda/\tau^\alpha N_t^{(\lfloor x\tau\rfloor)}))
   \ =\ \sum_{m=0}^\infty \frac{(-\lambda)^m}{m!} \me((x^\alpha X_t)^m)
   \ =\ \me(\exp(-\lambda x^\alpha X_t)).
   $$
   Thus, (\ref{tobeshown}) is established. The proof is complete.\hfill$\Box$
\end{proof}
Before we come to the proof of Theorem \ref{main} b), we provide a recursion
for the Laplace transforms of the finite-dimensional distributions of Neveu's
continuous-state branching process $Y=(Y_t)_{t\ge 0}$.
\begin{lemma}[Recursion for the Laplace transforms of ${\mathbf Y}$] \label{lap}
   Let $0=t_0\le t_1<t_2<\cdots.$ For $k\in\nz$ let
   $\psi_k:[0,\infty)^k\to [0,1]$, defined via
   $\psi_k(\lambda_1,\ldots,\lambda_k)
   :=\me(e^{-\lambda_1Y_{t_1}}\cdots e^{-\lambda_kY_{t_k}})$ for all
   $\lambda_1,\ldots,\lambda_k\ge 0$,
   denote the Laplace transform of $Y_{t_1},\ldots,Y_{t_k}$.
   Then, $\psi_k$ satisfies the recursion $\psi_1(\lambda_1)=e^{-\lambda_1^{\alpha_1}}$
   for all $\lambda_1\ge 0$ and
   $$
   \psi_k(\lambda_1,\ldots,\lambda_k)
   \ =\ \psi_{k-1}(\lambda_1,\ldots,\lambda_{k-2},\lambda_{k-1}+\lambda_k^{\alpha_k/\alpha_{k-1}}),
   \qquad\ k\in\nz\setminus\{1\}, \lambda_1,\ldots,\lambda_k\ge 0,
   $$
   where $\alpha_j:=e^{-t_j}$, $1\le j\le k$.
\end{lemma}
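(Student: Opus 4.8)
The plan is to exploit the branching property of Neveu's continuous-state branching process together with the explicit one-dimensional Laplace transform $\me(e^{-\lambda Y_t}\,|\,Y_0=y)=e^{-y\lambda^{e^{-t}}}$ encoded in the semigroup (\ref{semiy}). First I would record the Markov property: conditioning on $Y_{t_1},\ldots,Y_{t_{k-1}}$ and using time-homogeneity of $Y$,
$$
\psi_k(\lambda_1,\ldots,\lambda_k)
\ =\ \me\Big(e^{-\lambda_1Y_{t_1}}\cdots e^{-\lambda_{k-1}Y_{t_{k-1}}}\,
\me\big(e^{-\lambda_kY_{t_k}}\,\big|\,Y_{t_{k-1}}\big)\Big).
$$
By the branching property (equivalently, by applying the semigroup over the time increment $t_k-t_{k-1}$), the inner conditional expectation equals $\exp(-Y_{t_{k-1}}\lambda_k^{\,e^{-(t_k-t_{k-1})}})=\exp(-Y_{t_{k-1}}\lambda_k^{\,\alpha_k/\alpha_{k-1}})$, since $e^{-(t_k-t_{k-1})}=\alpha_k/\alpha_{k-1}$ with $\alpha_j=e^{-t_j}$. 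Substituting this back collapses the two right-most exponential factors $e^{-\lambda_{k-1}Y_{t_{k-1}}}$ and $e^{-Y_{t_{k-1}}\lambda_k^{\alpha_k/\alpha_{k-1}}}$ into the single factor $e^{-(\lambda_{k-1}+\lambda_k^{\alpha_k/\alpha_{k-1}})Y_{t_{k-1}}}$, which is exactly the statement
$$
\psi_k(\lambda_1,\ldots,\lambda_k)
\ =\ \psi_{k-1}(\lambda_1,\ldots,\lambda_{k-2},\lambda_{k-1}+\lambda_k^{\alpha_k/\alpha_{k-1}}).
$$
The base case $k=1$ is immediate from the $\alpha$-stable Laplace transform of $Y_{t_1}$ with $\alpha_1=e^{-t_1}$.

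The only point needing care is the case $t_1=0$ (allowed by the hypothesis $t_0\le t_1$), where $\psi_1(\lambda_1)=\me(e^{-\lambda_1Y_0})$; with the standing convention $Y_0=1$ this gives $e^{-\lambda_1}=e^{-\lambda_1^{\alpha_1}}$ since $\alpha_1=e^{-0}=1$, so the stated base case still holds. I expect the main (and essentially only) obstacle to be the bookkeeping that identifies the exponent $\alpha_k/\alpha_{k-1}$ with the time increment $t_k-t_{k-1}$ and the verification that the branching/semigroup identity $T^Y_{s}g(y)=\me(g(y^{e^{s}}Y_s))$ from (\ref{semiy}), applied to $g(u)=e^{-\lambda_k u}$, yields $\me(e^{-\lambda_k Y_s}\,|\,Y_0=y)=e^{-y\lambda_k^{e^{-s}}}$ — which is a one-line computation using $\me(e^{-\mu Y_s})=e^{-\mu^{e^{-s}}}$ with $\mu=\lambda_k$. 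No measure-theoretic subtleties arise since all quantities are bounded in $[0,1]$.
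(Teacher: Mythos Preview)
Your argument is correct and essentially identical to the paper's: condition on $Y_{t_1},\ldots,Y_{t_{k-1}}$, use the Markov property together with the explicit conditional Laplace transform $\me(e^{-\lambda_k Y_{t_k}}\mid Y_{t_{k-1}})=e^{-\lambda_k^{\alpha_k/\alpha_{k-1}}Y_{t_{k-1}}}$, and absorb this factor into the $(k-1)$th exponential. Your additional remarks on the edge case $t_1=0$ and the derivation of the conditional Laplace transform from (\ref{semiy}) are fine elaborations but not needed beyond what the paper does.
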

\begin{proof} (of Lemma \ref{lap})
   Clearly, $\psi_1(\lambda_1)=\me( e^{-\lambda_1 Y_{t_1}})
   =e^{-\lambda_1^{\alpha_1}}$ for all $\lambda_1\ge 0$.
   Moreover, for all $\lambda_1,\ldots,\lambda_k\ge 0$,
   \begin{eqnarray*}
      \psi_k(\lambda_1,\ldots,\lambda_k)
      & = & \me(e^{-\lambda_1Y_{t_1}}\cdots e^{-\lambda_kY_{t_k}})\\
      & = & \me(\me(
               e^{-\lambda_1Y_{t_1}}\cdots e^{-\lambda_kY_{t_k}}
            \,|\,Y_{t_1},\ldots,Y_{t_{k-1}}))\\
      & = & \me(
               e^{-\lambda_1Y_{t_1}}\cdots e^{-\lambda_{k-1}Y_{t_{k-1}}}
               \me(e^{-\lambda_kY_{t_k}}|Y_{t_{k-1}})
            ).
   \end{eqnarray*}
   Since $\me(e^{-\lambda_k Y_{t_k}}|Y_{t_{k-1}})=e^{-\lambda_k^{\alpha_k/\alpha_{k-1}}Y_{t_{k-1}}}$
   almost surely it follows that
   \begin{eqnarray*}
      \psi_k(\lambda_1,\ldots,\lambda_k)
      & = & \me(
         e^{\lambda_1Y_{t_1}}\cdots e^{-\lambda_{k-2}Y_{t_{k-2}}}
         e^{-(\lambda_{k-1}+\lambda_k^{\alpha_k/\alpha_{k-1}})Y_{t_{k-1}}}
      )\\
      & = & \psi_{k-1}(\lambda_1,\ldots,\lambda_{k-2},\lambda_{k-1}+\lambda_k^{\alpha_k/\alpha_{k-1}}).
   \end{eqnarray*}

   \vspace{-6mm}

    \hfill$\Box$
\end{proof}
We are now able to verify Theorem \ref{main} b).
\begin{proof} (of Theorem \ref{main} b))
   The proof is divided into two parts. First the convergence of the
   finite-dimensional distributions is verified.
   Afterwards the convergence in $D_E[0,\infty)$ is considered.
   In fact Part 2 does not use results from Part 1, so one could omit
   Part 1. However, we think it is helpful for the reader to consider
   first the convergence of the finite-dimensional distributions.

   \vspace{2mm}

   {\bf Part 1.} (Convergence of the finite-dimensional distributions)
   Fix $0=t_0\le t_1<t_2<\cdots$. For $k,n\in\nz$ let $\psi_k^{(n)}
   :[0,\infty)^k\to [0,1]$ and $\psi_k:[0,\infty)^k\to [0,1]$ denote
   the Laplace transforms of $(Y_{t_1}^{(n)},\ldots,Y_{t_k}^{(n)})$ and
   $(Y_{t_1},\ldots,Y_{t_k})$ respectively.
   In the following the pointwise convergence $\psi_k^{(n)}\to\psi_k$
   as $n\to\infty$ is verified by induction on $k\in\nz$.

   Obviously, $L_{t_1}^{(n)}$ has generating function
   $\me(z_1^{L_{t_1}^{(n)}})=(1-(1-z_1)^{\alpha_1})^n$, $z_1\in [0,1]$,
   where $\alpha_1:=e^{-t_1}$. Replacing $z_1$ by $e^{-\lambda_1/n^{1/\alpha_1}}$
   with $\lambda_1\ge 0$ it follows that
   $$
   \psi_1^{(n)}(\lambda_1)
   \ =\ \me(e^{-\lambda_1 Y_{t_1}^{(n)}})
   \ =\ (1-(1-e^{-\lambda_1/n^{1/\alpha_1}})^{\alpha_1})^n.
   $$
   Clearly, $\psi_1(\lambda_1)=\me(e^{-\lambda_1 Y_{t_1}})=e^{-\lambda_1^{\alpha_1}}$.
   Using the shortage $x:=\lambda_1/n^{1/\alpha_1}$ and the
   inequality $|a^n-b^n|\le n|a-b|$, $|a|,|b|\le 1$, it follows that
   \begin{eqnarray*}
      |\psi_1^{(n)}(\lambda_1) - \psi_1(\lambda_1)|
      & = & |(1-(1-e^{-x})^{\alpha_1})^n - (e^{-x^{\alpha_1}})^n|\\
      & \le & n|1-(1-e^{-x})^{\alpha_1} - e^{-x^{\alpha_1}}|
      \ = \ n(e^{-x^{\alpha_1}}-1+(1-e^{-x})^{\alpha_1}),
   \end{eqnarray*}
   since $(1-e^{-x})^{\alpha_1}\ge 1-e^{-x^{\alpha_1}}$ by Lemma \ref{inequality}.
   From $1-e^{-x}\le x$, $x\in\rz$, and $e^{-t}-1+t\le t^2/2$, $t\ge 0$,
   we conclude that
   $$
      |\psi_1^{(n)}(\lambda_1)-\psi_1(\lambda_1)|
      \ \le\ n(e^{-x^{\alpha_1}}-1+x^{\alpha_1})
      \ \le\ n\frac{(x^{\alpha_1})^2}{2}
      \ =\ \frac{\lambda_1^{2\alpha_1}}{2n}
      \ \to\ 0,\qquad n\to\infty.
   $$
   Thus, the pointwise convergence $\psi_1^{(n)}\to\psi_1$
   as $n\to\infty$ is established.

   Now fix $k\in\nz\setminus\{1\}$. The induction step from $k-1$ to $k$
   works as follows. For convenience define $\alpha_j:=e^{-t_j}$ for
   all $j\in\nz$. For all $z_1,\ldots,z_k\in [0,1]$,
   \begin{eqnarray*}
      \me(z_1^{L_{t_1}^{(n)}}\cdots z_k^{L_{t_k}^{(n)}})
      & = & \me(\me(z_1^{L_{t_1}^{(n)}}\cdots z_k^{L_{t_k}^{(n)}}
            \,|\,L_{t_1}^{(n)},\ldots,L_{t_{k-1}}^{(n)}))\\
      & = & \me(z_1^{L_{t_1}^{(n)}}\cdots z_{k-1}^{L_{t_{k-1}}^{(n)}}
            \me(z_k^{L_{t_k}^{(n)}}\,|\,L_{t_{k-1}}^{(n)})).
   \end{eqnarray*}
   Since $\me(z_k^{L_{t_k}^{(n)}}\,|\,L_{t_{k-1}}^{(n)})=(1-(1-z_k)^{\alpha_k/\alpha_{k-1}})^{L_{t_{k-1}}^{(n)}}$
   almost surely it follows that
   $$
   \me(z_1^{L_{t_1}^{(n)}}\cdots z_k^{L_{t_k}^{(n)}})
   \ =\ \me(z_1^{L_{t_1}^{(n)}}\cdots z_{k-2}^{L_{t_{k-2}}^{(n)}}
   u_{k-1}^{L_{t_{k-1}}^{(n)}}),
   $$
   where 
   $u_{k-1}:=z_{k-1}(1-(1-z_k)^{\alpha_k/\alpha_{k-1}})$.
   Replacing for each $j\in\{1,\ldots,k\}$ the variable $z_j$ by
   $e^{-\lambda_j/n^{1/\alpha_j}}$ with $\lambda_j\ge 0$
   it follows that
   \begin{eqnarray}
      \psi_k^{(n)}(\lambda_1,\ldots,\lambda_k)
      & = & \me(e^{-\lambda_1 Y_{t_1}^{(n)}}\cdots e^{-\lambda_kY_{t_k}^{(n)}})\nonumber\\
      & = & \me(e^{-\lambda_1 Y_{t_1}^{(n)}}\cdots e^{-\lambda_{k-2}Y_{t_{k-2}}^{(n)}}e^{-\mu_{k-1}(n)Y_{t_{k-1}}^{(n)}})\nonumber\\
      & = & \psi_{k-1}^{(n)}(\lambda_1,\ldots,\lambda_{k-2},\mu_{k-1}(n)), \label{local}
   \end{eqnarray}
   where 
   $$
   \mu_{k-1}(n)
   \ :=\ \lambda_{k-1}-n^{1/\alpha_{k-1}}\log(1-(1-e^{-\lambda_k/n^{1/\alpha_k}})^{\alpha_k/\alpha_{k-1}}).
   $$
   A technical but straightforward calculation shows that
   $\mu_{k-1}(n)\to\lambda_{k-1}+\lambda_k^{\alpha_k/\alpha_{k-1}}$ as
   $n\to\infty$. Moreover, by induction, $\psi_{k-1}^{(n)}$ converges
   pointwise to $\psi_{k-1}$ as $n\to\infty$. It is well known that
   the convergence $\psi_{k-1}^{(n)}\to\psi_{k-1}$ of Laplace transforms holds
   even uniformly on any compact subset of $[0,\infty)^{k-1}$.
   Taking these facts into account it follows from (\ref{local}) that
   \begin{eqnarray*}
      \lim_{n\to\infty}\psi_k^{(n)}(\lambda_1,\ldots,\lambda_k)
      & = & \lim_{n\to\infty}\psi_{k-1}^{(n)}(\lambda_1,\ldots,\lambda_{k-2},\mu_{k-1}(n))\\
      & = & \psi_{k-1}(\lambda_1,\ldots,\lambda_{k-2},\lambda_{k-1}+\lambda_k^{\alpha_k/\alpha_{k-1}})
      \ = \ \psi_k(\lambda_1,\ldots,\lambda_k),
   \end{eqnarray*}
   where the last equality holds by Lemma \ref{lap}. The induction is complete.

   The pointwise convergence $\psi_k^{(n)}\to\psi_k$ of the Laplace transforms
   implies the convergence $(Y_{t_1}^{(n)},\ldots,Y_{t_k}^{(n)})\to
   (Y_{t_1},\ldots,Y_{t_k})$ in distribution as $n\to\infty$.

   \vspace{2mm}

   {\bf Part 2.} (Convergence in $D_E[0,\infty)$)
   Recall that $E:=[0,\infty)$ is the state space of the limiting process
   $Y$.
   For $n\in\nz$ and $t\ge 0$ define $E_{n,t}:=\{j/n^{e^t}\,:\,j=n,n+1,\ldots\}$.
   Note that the processes $Y^{(n)}$ are time-inhomogeneous. In order to
   obtain time-homogeneous processes let
   $Z^{(n)}:=(Y_t^{(n)},t)_{t\ge 0}$ and $Z:=(Y_t,t)_{t\ge 0}$ denote
   the space-time processes of $(Y_t^{(n)})_{t\ge 0}$ and $(Y_t)_{t\ge 0}$
   respectively. Note that $Z^{(n)}$ has state space
   $S_n:=\{(j/n^{e^t},t)\,:\,j=n,n+1,\ldots,t\ge 0\}
   =\bigcup_{t\ge 0}(E_{n,t}\times\{t\})$ and that $Z$ has state space
   $S:=E\times [0,\infty)=[0,\infty)^2$. According to Revuz and Yor
   \cite[p.~85, Exercise (1.10)]{revuzyor}, the processes $Z^{(n)}$ and
   $Z$ are time-homogeneous. Define $\pi_n:B(S)\to B(S_n)$ via
   $\pi_ng(y,s):=g(y,s)$ for all $g\in B(S)$ and $(y,s)\in S_n$.
   In the following it is shown that $Z^{(n)}$ converges in $D_S[0,\infty)$
   to $Z$ as $n\to\infty$.
   For $\lambda,\mu>0$
   define the test function $g_{\lambda,\mu}\in \widehat{C}(S)$
   via $g_{\lambda,\mu}(y,s):=e^{-\lambda y-\mu s}$, $(y,s)\in S$.
   By Proposition \ref{laplaceprop}
   it suffices to verify that for every $t\ge 0$
   and $\lambda,\mu>0$,
   \begin{equation} \label{conv}
      \lim_{n\to\infty}\sup_{s\ge 0}\sup_{y\in E_{n,s}}
      |U_t^{(n)}\pi_ng_{\lambda,\mu}(y,s)-\pi_nU_tg_{\lambda,\mu}(y,s)|
      \ =\ 0,
   \end{equation}
   where $U_t^{(n)}:B(S_n)\to B(S_n)$ is defined via
   $U_t^{(n)}g(y,s):=\me(g(Y_{s+t}^{(n)},s+t)\,|\,Y_s^{(n)}=y)$,
   $g\in B(S_n)$, $s\ge 0$, $y\in E_{n,s}$. Note that
   $(U_t^{(n)})_{t\ge 0}$ is the semigroup of $Z^{(n)}$.

   Fix $t\ge 0$ and $\lambda,\mu>0$.
   As before define $\alpha:=e^{-t}$ and $\beta:=e^{-s}$.
   For all $n\in\nz$, $s\ge 0$ and $y\in E_{n,s}$,
   \begin{eqnarray*}
      U_t^{(n)}\pi_ng_{\lambda,\mu}(y,s)
      & = & \me(\pi_ng_{\lambda,\mu}(Y_{s+t}^{(n)},s+t)\,|\,Y_s^{(n)}=y)\\
      & = & \me(\exp(-\lambda Y_{s+t}^{(n)}-\mu(s+t))\,|\,Y_s^{(n)}=y)\\
      & = & (\alpha\beta)^\mu\me(\exp(-\lambda/n^{1/(\alpha\beta)} L_{s+t}^{(n)})\,|\,L_s^{(n)}=yn^{1/\beta})\\
      & = & (\alpha\beta)^\mu\me(\exp(-\lambda/n^{1/(\alpha\beta)}L_t^{(yn^{1/\beta})}))\\
      & = & (\alpha\beta)^\mu(1-(1-e^{-\lambda/n^{1/(\alpha\beta)}})^\alpha)^{yn^{1/\beta}}
   \end{eqnarray*}
   and $\pi_nU_tg_{\lambda,\mu}(y,s)=U_tg_{\lambda,\mu}(y,s)
   =\me(\exp(-\lambda Y_{s+t}-\mu(s+t))\,|\,Y_s=y)
   =(\alpha\beta)^\mu e^{-y\lambda^\alpha}$.
   Define $m:=yn^{1/\beta}\in\{n,n+1,\ldots\}$ and $x:=\lambda/n^{1/(\alpha\beta)}$.
   In the following it is assumed that $n\ge\lambda$ which implies that
   $x\le 1$.
   Using the inequality $|a^m-b^m|\le mr^{m-1}|a-b|$, $m\in\nz$, where
   $r:=\max(|a|,|b|)$,
   it follows that
   \begin{eqnarray*}
   d
   & := & |(1-(1-e^{-\lambda/n^{1/(\alpha\beta)}})^\alpha)^{yn^{1/\beta}} -  e^{-y\lambda^\alpha}|\\
   & = & |(1-(1-e^{-x})^\alpha)^m-(e^{-x^\alpha})^m|\\
   & \le & mr^{m-1} |1-(1-e^{-x})^\alpha-e^{-x^\alpha}|,
   \end{eqnarray*}
   where $r:=\max(1-(1-e^{-x})^\alpha,e^{-x^\alpha})=e^{-x^\alpha}$ by
   Lemma \ref{inequality}. Note that $r\in (0,1)$.

   The map $z\mapsto zr^{z-1}$, $z\ge 0$ takes its
   maximum at the point $z=1/(-\log r)=1/x^\alpha$. Thus,
   $mr^{m-1}\le 1/x^\alpha r^{1/x^\alpha-1}\le 1/x^\alpha$, since
   $r\le 1$ and $x\le 1$, i.e. $1/x^\alpha-1\ge 0$.
   Furthermore, $|1-(1-e^{-x})^\alpha-e^{-x^\alpha}|=
   e^{-x^\alpha}-1+(1-e^{-x})^\alpha
   \le e^{-x^\alpha}-1+x^\alpha\le (x^\alpha)^2/2$.
   Therefore, we obtain the upper bound
   $$
   d\ \le\ \frac{1}{x^\alpha}\frac{(x^\alpha)^2}{2}
   \ =\ \frac{x^\alpha}{2}=\frac{\lambda^\alpha}{2n^{e^s}}
   \ \le \frac{\lambda^\alpha}{2n}.
   $$
   Note that this upper bound does not depend on $y$ and $s$.
   Thus, for all $t\ge 0$, $\lambda,\mu>0$ and all $n\in\nz$ with
   $n\ge\lambda$,
   \begin{eqnarray*}
      &   & \hspace{-15mm}
      \sup_{s\ge 0}
      \sup_{y\in E_{n,s}}
      |U_t^{(n)}\pi_n g_{\lambda,\mu}(y,s)-\pi_nU_tg_{\lambda,\mu}(y,s)|\\
      & = & \sup_{s\ge 0}\sup_{y\in E_{n,s}}
            \underbrace{|e^{-\mu(s+t)}|}_{\le 1}
            \,|(1-(1-e^{-\lambda/n^{e^{s+t}}})^{e^{-t}})^{yn^{e^s}}-e^{-y\lambda^{e^{-t}}}|
      \ \le \ \frac{\lambda^\alpha}{2n}\ \to\ 0
            \label{explicit}
   \end{eqnarray*}
   as $n\to\infty$.
   Therefore, (\ref{conv}) holds for all $t\ge 0$ and all $\lambda,\mu>0$.
   \hfill$\Box$
\end{proof}

\subsection{Appendix}
\setcounter{theorem}{0}
\begin{lemma} \label{inequality}
   For all $x\ge 0$ and all $\alpha\in [0,1]$ we have
   $(1-e^{-x})^\alpha\ge 1-e^{-x^\alpha}$.
\end{lemma}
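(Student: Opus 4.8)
The plan is to fix $\alpha\in(0,1)$ (the boundary cases $\alpha\in\{0,1\}$ are immediate, giving equality or $1\ge1-e^{-1}$) and to study the single function
$$
F(x)\ :=\ (1-e^{-x})^\alpha-1+e^{-x^\alpha},\qquad x\ge0 .
$$
Since $F(0)=0$ and $F(x)\to0$ as $x\to\infty$, it suffices to show that $F$ is unimodal on $(0,\infty)$, i.e.\ first increases and then decreases; the two equal boundary values then force $F\ge0$ throughout, which is the assertion of the lemma.

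To control the sign of $F'$, note that $F'(x)=\alpha(1-e^{-x})^{\alpha-1}e^{-x}-\alpha x^{\alpha-1}e^{-x^\alpha}$, both of whose terms are strictly positive for $x>0$, so that (comparing and taking logarithms)
$$
F'(x)\ge0
\qquad\Longleftrightarrow\qquad
G(x)\ :=\ (1-\alpha)\log\frac{x}{1-e^{-x}}-x+x^\alpha\ \ge\ 0 .
$$
Using $x/(1-e^{-x})\to1$ one finds $G(0^{+})=0$, and in fact $G(x)\sim x^\alpha>0$ as $x\downarrow0$, whereas $G(x)\to-\infty$ as $x\to\infty$ because the term $-x$ dominates. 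Thus the proof reduces to showing that $G$ changes sign exactly once, from $+$ to $-$.

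I would derive this from the strict monotonicity of $G'$. A short computation gives $G'(x)=(1-\alpha)\rho(x)+\alpha x^{\alpha-1}-1$ with $\rho(x):=\frac1x-\frac1{e^x-1}$. The map $x\mapsto\alpha x^{\alpha-1}$ is strictly decreasing on $(0,\infty)$ since $\alpha-1<0$, and $\rho$ is strictly decreasing as well: $\rho'(x)<0$ is equivalent to $(e^x-1)^2>x^2e^x$, hence to $2\sinh(x/2)>x$, which holds for all $x>0$. Hence $G'$ is strictly decreasing, with $G'(0^{+})=+\infty$ and $G'(\infty)=-1$, so $G'$ has a unique zero; therefore $G$ increases and then decreases, and together with $G(0^{+})=0$ and $G(\infty)=-\infty$ this gives precisely one sign change of $G$. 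This yields the unimodality of $F$, hence $F\ge0$.

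The only step I would write out with genuine care is the monotonicity of $\rho$, i.e.\ the elementary inequality $2\sinh(x/2)>x$ for $x>0$; the remainder is routine boundary-value analysis and sign bookkeeping. An essentially equivalent alternative is to raise both sides of the claimed inequality to the power $1/\alpha$ and substitute $y=x^\alpha$, reducing it to $(1-e^{-y})^\beta\le1-e^{-y^\beta}$ for $\beta\ge1$, and then to run the same ``increase-then-decrease between equal endpoints'' argument for $H(y):=1-e^{-y^\beta}-(1-e^{-y})^\beta$, which again relies on the monotonicity of $\rho$.
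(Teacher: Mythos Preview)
Your argument is correct: the unimodality scheme works, the computation of $G$ and $G'$ is accurate, and the reduction of $\rho'<0$ to $2\sinh(x/2)>x$ is valid. So the proof goes through.

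The paper's route is considerably shorter and avoids the two-level analysis of $G$ and $G'$. It splits at $x=1$. For $x\ge 1$ one has $x^\alpha\le x$, so $(1-e^{-x})^\alpha\ge 1-e^{-x}\ge 1-e^{-x^\alpha}$ in one line. For $x\in[0,1]$ the paper shows directly that your $F'(x)\ge 0$: since $x\le x^\alpha$ there one has $e^{-x}\ge e^{-x^\alpha}$, and since $1-e^{-x}\le x$ and $\alpha-1\le 0$ one has $(1-e^{-x})^{\alpha-1}\ge x^{\alpha-1}$; multiplying gives $F'\ge 0$, and $F(0)=0$ finishes. Your approach treats the whole half-line at once via a global shape argument, which is conceptually clean but requires the extra machinery (logarithms, the auxiliary $\rho$, the $\sinh$ inequality). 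The paper's case split exploits that on $[0,1]$ the two factors in $F'$ compare termwise in the right direction, which makes the derivative inequality immediate and removes any need for unimodality or boundary matching at infinity.
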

\begin{proof}
   Fix $\alpha\in [0,1]$. If $x\ge 1$ then $x^\alpha\le x$ and, hence,
   $(1-e^{-x})^\alpha\ge 1-e^{-x}\ge 1-e^{-x^\alpha}$. Assume now that
   $x\in [0,1]$. Then $x^\alpha\ge x$. The function $f(x):=
   (1-e^{-x})^\alpha-1+e^{-x^\alpha}$ satisfies $f(0)=0$ and has derivative
   $f'(x)=\alpha e^{-x}(1-e^{-x})^{\alpha-1}-\alpha x^{\alpha-1}e^{-x^\alpha}$,
   which is nonnegative on $[0,1]$, since $e^{-x}\ge e^{-x^\alpha}$ and
   $(1-e^{-x})^{\alpha-1}\ge x^{\alpha-1}$ for $x\in [0,1]$. From $f(0)=0$
   and $f'(x)\ge 0$ for $x\in [0,1]$ it follows that $f(x)\ge 0$ for
   $x\in [0,1]$, which is the desired inequality.\hfill$\Box$
\end{proof}
\begin{lemma}[Spectral decomposition of $\mathbf\Gamma$ for the Kingman coalescent]
   \label{kingmanspectral}
   \ \\
   The generator $\Gamma=(\gamma_{ij})_{i,j\in\nz}$ of the fixation line
   $(L_t)_{t\ge 0}$ of the Kingman coalescent has spectral decomposition
   $\Gamma=RDL$, where $D=(d_{ij})_{i,j\in\nz}$ is the diagonal matrix
   with entries $d_{ij}=-i(i+1)/2$ for $i=j$ and $d_{ij}=0$ for $i\neq j$,
   and $R=(r_{ij})_{i,j\in\nz}$ and $L=(l_{ij})_{i,j\in\nz}$ are upper
   right triangular matrices with entries
   \begin{equation} \label{rkingman}
      r_{ij}\ =\ (-1)^{j-i}\frac{j!\,(j-1)!\,(i+j)!}{(j-i)!\,i!\,(i-1)!\,(2j)!},
      \qquad i,j\in\nz, i\le j,
   \end{equation}
   and
   \begin{equation} \label{lkingman}
      l_{ij}\ =\ \frac{j!\,(j-1)!\,(2i+1)!}{i!\,(i-1)!\,(j-i)!\,(i+j+1)!},
      \qquad i,j\in\nz, i\le j.
   \end{equation}
\end{lemma}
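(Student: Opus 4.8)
The plan is to reuse the recursive scheme from Proof 1 of Theorem \ref{fixationtheo}, which for the Kingman fixation line degenerates into two-term recursions that can be solved by inspection, so that no generating functions are needed. Recall (see \cite{gaisermoehle}) that the fixation line of the Kingman coalescent is the pure-birth process with $\gamma_{i,i+1}=\gamma_i:=i(i+1)/2$, $\gamma_{ii}=-\gamma_i$ and all other entries zero; in particular $\Gamma$ is upper right triangular, so its eigenvalues are the diagonal entries $\gamma_{ii}=-i(i+1)/2$ and $D$ is forced to be as stated. As in (\ref{rrec}) and (\ref{lrec}), I would define the upper right triangular matrix $R$ by $r_{jj}=1$ together with the recursion coming from $RD=\Gamma R$, put $L:=R^{-1}$, and recall that then $DL=L\Gamma$.

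Since only the entries $\gamma_{ii}$ and $\gamma_{i,i+1}$ in each row of $\Gamma$ are nonzero, the general recursion (\ref{rrec}) collapses, for $i<j$, to
\[
   r_{ij}\ =\ \frac{\gamma_{i,i+1}}{\gamma_i-\gamma_j}\,r_{i+1,j}
        \ =\ \frac{\gamma_i}{\gamma_i-\gamma_j}\,r_{i+1,j}
        \ =\ \frac{i(i+1)}{(i-j)(i+j+1)}\,r_{i+1,j},
\]
using $\gamma_i-\gamma_j=(i-j)(i+j+1)/2$. Telescoping downward from $r_{jj}=1$ gives $r_{ij}=\prod_{l=i}^{j-1}\gamma_l/(\gamma_l-\gamma_j)$, and rewriting the four products $\prod_{l=i}^{j-1}l$, $\prod_{l=i}^{j-1}(l+1)$, $\prod_{l=i}^{j-1}(l-j)$, $\prod_{l=i}^{j-1}(l+j+1)$ as ratios of factorials yields (\ref{rkingman}), the sign $(-1)^{j-i}$ coming from $\prod_{l=i}^{j-1}(l-j)$. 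In the same way (\ref{lrec}) collapses, for $j>i$, to
\[
   l_{ij}\ =\ \frac{\gamma_{j-1,j}}{\gamma_j-\gamma_i}\,l_{i,j-1}
        \ =\ \frac{\gamma_{j-1}}{\gamma_j-\gamma_i}\,l_{i,j-1}
        \ =\ \frac{j(j-1)}{(j-i)(i+j+1)}\,l_{i,j-1},
\]
and telescoping upward from $l_{ii}=1$ produces (\ref{lkingman}) after the analogous factorial bookkeeping.

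It remains to check that the matrix $L$ obtained above is really $R^{-1}$, so that $\Gamma=RDR^{-1}=RDL$. Rather than verifying a Stirling-type orthogonality by hand, I would argue abstractly: from $\Gamma R=RD$ and $L\Gamma=DL$ one gets $D(LR)=L\Gamma R=(LR)D$, so $LR$ commutes with $D$; as the diagonal entries $-i(i+1)/2$ of $D$ are pairwise distinct, $LR$ is diagonal, and since $R$ and $L$ are both upper right triangular with unit diagonal, $(LR)_{ii}=l_{ii}r_{ii}=1$. Hence $LR=I$, and within the group of semi-infinite upper triangular matrices with unit diagonal this forces $L=R^{-1}$ (and $RL=I$), which gives the claimed decomposition $\Gamma=RDL$.

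The argument is entirely elementary; the only points needing a little care are the factorial bookkeeping that turns the telescoping products into (\ref{rkingman}) and (\ref{lkingman}) -- in particular getting the sign right -- and the observation that the infinite-matrix products used above ($\Gamma R$, $L\Gamma$, $LR$) are entrywise finite sums by triangularity, so that all manipulations are legitimate. A shorter alternative, paralleling Proof 2 of Theorem \ref{fixationtheo}, would start from the known spectral decomposition of the generator of the Kingman block counting process \cite{kuklapitters} and transport it to $\Gamma$ through the Siegmund duality relation $H\Gamma^{\top}=QH$ together with the shift relating the two diagonal matrices; I would mention this but carry out the direct recursive proof as the main argument.
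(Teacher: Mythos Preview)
Your proof is correct and follows essentially the same route as the paper's: specialize the general recursions (\ref{rrec}) and (\ref{lrec}) to the pure-birth case, obtain the telescoping products $r_{ij}=\prod_{k=i}^{j-1}\gamma_k/(\gamma_k-\gamma_j)$ and $l_{ij}=\prod_{k=i+1}^{j}\gamma_{k-1}/(\gamma_k-\gamma_i)$, and rewrite them as the stated factorial expressions. Your closing verification that $L=R^{-1}$ via the commutation argument is valid but redundant, since in the setup you already put $L:=R^{-1}$ and the recursion (\ref{lrec}) is a consequence of that definition; the paper simply omits this step.
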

\begin{remark}
   Note that $l_i(z):=\sum_{j=i}^\infty l_{ij}z^{j+1}$ satisfies the
   differential equation $z^2(1-z)l_i''(z)=i(i+1)l_i(z)$, $i\in\nz$,
   $|z|<1$.
\end{remark}
\begin{proof}
   For a pure birth process the recursion (\ref{rrec}) reduces to $r_{ij}=
   \gamma_i/(\gamma_i-\gamma_j)r_{i+1,j}$, $i\in\{j-1,j-2,\ldots,1\}$, with
   solution $r_{ij}=\prod_{k=i}^{j-1}\gamma_k/(\gamma_k-\gamma_j)$, $i\le j$.
   Thus, for the Kingman coalescent, for all $i,j\in\nz$ with $i\le j$,
   $$
   r_{ij}
   \ =\ \prod_{k=i}^{j-1}\frac{k(k+1)}{k(k+1)-j(j+1)}
   \ =\ \prod_{k=i}^{j-1}\frac{k(k+1)}{(k-j)(k+j+1)}
   \ =\ (-1)^{j-i}\frac{j!\,(j-1)!\,(i+j)!}{(j-i)!\,i!\,(i-1)!\,(2j)!}.
   $$
   Similarly, the recursion (\ref{lrec}) reduces to
   $l_{ij}=\gamma_{j-1}/(\gamma_j-\gamma_i)l_{i,j-1}$, $j\in \{i+1,i+2,\ldots\}$,
   with solution $l_{ij}=\prod_{k=i+1}^j\gamma_{k-1}/(\gamma_k-\gamma_i)$,
   $i\le j$. Thus, for the Kingman coalescent, for all $i,j\in\nz$ with
   $i\le j$,
   $$
   l_{ij}
   \ =\ \prod_{k=i+1}^j\frac{k(k-1)}{k(k+1)-i(i+1)}
   \ =\ \prod_{k=i+1}^{j}\frac{k(k-1)}{(k-i)(k+i+1)}
   \ =\ \frac{j!\,(j-1)!\,(2i+1)!}{i!\,(i-1)!\,(j-i)!\,(i+j+1)!}.
   $$
   \hfill$\Box$
\end{proof}
Let $E$ be locally compact, i.e. every point $x\in E$ has a compact
neighborhood. A function $f:E\to\rz$ vanishes at infinity, if for every
$\varepsilon>0$ there exists a compact $K\subseteq E$ such that
$|f(x)|<\varepsilon$ for all $x\in E\setminus K$. In other words
$\{x\in E\,:\,|f(x)|\ge\varepsilon\}$ is compact.
In the following $\widehat{C}(E)$ denotes the set of all real-valued
continuous functions on $E$ vanishing at infinity.
\begin{lemma} \label{dense}
   Let $d\in\nz$. The set $D$ of all functions
   $g:[0,\infty)^d\to\rz$ of the form
   $g(y)=\sum_{i_1,\ldots,i_d=1}^m a_{i_1,\ldots,i_d} e^{-(i_1y_1+\cdots+i_dy_d)}$
   with $m\in\nz$ and $a_{i_1,\ldots,i_d}\in\rz$
   is dense in $\widehat{C}([0,\infty)^d)$.
\end{lemma}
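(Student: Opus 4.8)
The plan is to reduce the statement to the Stone--Weierstrass theorem by means of the coordinatewise substitution $x_j=e^{-y_j}$. Consider the map $\phi\colon[0,\infty)^d\to(0,1]^d$ given by $\phi(y_1,\ldots,y_d):=(e^{-y_1},\ldots,e^{-y_d})$. This map is a homeomorphism and it is proper: a set $K\subseteq[0,\infty)^d$ is compact if and only if $\phi(K)$ is a compact subset of $(0,1]^d$, equivalently a subset of $(0,1]^d$ that is closed in $(0,1]^d$ and bounded away from the face $\partial_0:=\{x\in[0,1]^d:\min_{1\le j\le d}x_j=0\}$. Consequently $f\mapsto f\circ\phi$ is an isometric algebra isomorphism from $\widehat{C}((0,1]^d)$ onto $\widehat{C}([0,\infty)^d)$, under which the monomial function $x\mapsto x_1^{i_1}\cdots x_d^{i_d}$ is carried to $y\mapsto e^{-(i_1y_1+\cdots+i_dy_d)}$. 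Hence $D$ is precisely the image of
$$
A\ :=\ \Bigl\{\,\sum_{i_1,\ldots,i_d=1}^{m}a_{i_1,\ldots,i_d}\,x_1^{i_1}\cdots x_d^{i_d}\ :\ m\in\nz,\ a_{i_1,\ldots,i_d}\in\rz\,\Bigr\},
$$
i.e.\ of the set of those real polynomials on $(0,1]^d$ all of whose monomials are divisible by $x_1\cdots x_d$. It therefore suffices to prove that $A$ is dense in $\widehat{C}((0,1]^d)$.

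Next I would verify the hypotheses of the Stone--Weierstrass theorem for the locally compact Hausdorff space $(0,1]^d$, in the form: a subalgebra of $\widehat{C}(X)$ that separates points and vanishes at no point of $X$ is uniformly dense in $\widehat{C}(X)$. First, $A\subseteq\widehat{C}((0,1]^d)$: each $p\in A$ can be written as $p=x_1\cdots x_d\,q$ with $q$ a polynomial, so $q$ is bounded on $[0,1]^d$ and therefore $|p(x)|\to 0$ as $\min_j x_j\to 0$; thus $p$ vanishes at infinity on $(0,1]^d$. Moreover $A$ is closed under addition and scalar multiplication, and under multiplication, since the product of two monomials each divisible by $x_1\cdots x_d$ is again a monomial all of whose exponents are $\ge 1$. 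Second, $A$ vanishes nowhere, because $x\mapsto x_1\cdots x_d$ lies in $A$ and is strictly positive on $(0,1]^d$. Third, $A$ separates points: if $x\ne x'$ in $(0,1]^d$ there is an index $k$ with $x_k\ne x_k'$; if the two elements $x_1\cdots x_d$ and $x_k^2\prod_{j\ne k}x_j$ of $A$ took equal values at $x$ and $x'$, dividing the second identity by the first (all factors being strictly positive) would give $x_k=x_k'$, a contradiction, so one of these two functions separates $x$ from $x'$.

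By the Stone--Weierstrass theorem, $A$ is dense in $\widehat{C}((0,1]^d)$, and transporting this along the proper homeomorphism $\phi$ shows that $D$ is dense in $\widehat{C}([0,\infty)^d)$, as claimed. I do not expect a genuine obstacle; the only point demanding a little care is the bookkeeping at infinity --- checking that ``vanishing at infinity on $[0,\infty)^d$'' corresponds under $\phi$ exactly to ``vanishing on $\partial_0$''. This is precisely why the summation indices in the definition of $D$ start at $1$ rather than $0$: allowing a constant term would put the unit function into $A$, which does not vanish at infinity, destroying both the inclusion $A\subseteq\widehat{C}((0,1]^d)$ and the usefulness of the argument.
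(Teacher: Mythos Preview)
Your argument is correct. Both proofs hinge on the same substitution $x_j=e^{-y_j}$, which turns the exponential family $D$ into polynomials on $(0,1]^d$ with every exponent at least $1$; the difference lies only in how density of the latter is established. The paper extends $g\circ\phi^{-1}$ by zero to the full cube $[0,1]^d$ and approximates it there by multivariate Bernstein polynomials, observing that the boundary vanishing kills all terms with some $k_j=0$; this is constructive and yields explicit approximants. You instead stay on the open face $(0,1]^d$, view it as a locally compact Hausdorff space, and invoke the $C_0$ form of Stone--Weierstrass after checking that $A$ is a point-separating, nowhere-vanishing subalgebra of $\widehat{C}((0,1]^d)$. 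Your route is shorter and more conceptual, and it makes transparent why the indices must start at $1$ (otherwise $A\not\subseteq\widehat{C}$); the paper's route has the minor advantage of giving concrete approximating functions and avoids quoting the locally compact version of Stone--Weierstrass.
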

\begin{proof}
   Let $g\in\widehat{C}([0,\infty)^d)$. Define $f:[0,1]^d\to\rz$ via
   $f(x):=g(-\log x_1,\ldots,-\log x_d)$ for $x\in (0,1]^d$ and $f(x):=0$
   if $x_j=0$ for some $j\in\{1,\ldots,d\}$.
   Since $g$ is continuous and vanishes at infinity it follows
   that $f$ is continuous. For $n\in\nz$ and $x=(x_1,\ldots,x_d)\in [0,1]^d$ let
   $$
   p_n(x)\ :=\ \sum_{k_1,\ldots,k_d=1}^n
   f\Big(\frac{k_1}{n},\ldots,\frac{k_d}{n}\Big)\prod_{j=1}^d {n\choose k_j}x_j^{k_j}(1-x_j)^{n-k_j}.
   $$
   denote the $n$th multivariate Bernstein polynomial of $f$.
   Note that the sum runs only over $k=(k_1,\ldots,k_d)\in\{1,\ldots,n\}^d$
   (not as usual over $k\in\{0,\ldots,n\}^d$) since $f(x)=0$ if $x_j=0$ for some $j\in\{1,\ldots,d\}$.
   By a $d$-dimensional version of Bernstein's approximation theorem
   (see, for example, \cite[Theorem 8]{duchon}),
   $p_n\to f$ as $n\to\infty$ uniformly on $[0,1]^d$.
   Replacing $x_j$ by $e^{-y_j}$ it follows that $g_n\to g$ as $n\to\infty$
   uniformly on $[0,\infty)^d$, where
   $g_n(y):=p_n(e^{-y_1},\ldots,e^{-y_d})$.
   It remains to note that $g_n\in D$.\hfill$\Box$
\end{proof}
\begin{proposition}[Convergence of Markov processes] 
\label{laplaceprop}
   Let $d\in\nz$, $E:=[0,\infty)^d$ and $X=(X_t)_{t\ge 0}$ be an $E$-valued
   time-homogeneous Markov process. Furthermore, for
   every $n\in\nz$ let $X^{(n)}=(X_t^{(n)})_{t\ge 0}$ be an
   $E_n$-valued time-homogeneous Markov process with state space $E_n\subseteq E$.
   Let $(T_t)_{t\ge 0}$ and $(T_t^{(n)})_{t\ge 0}$ denote the corresponding
   semigroups. Define $\pi_n:B(E)\to B(E_n)$ via $\pi_nf(x):=f(x)$ for all
   $f\in B(E)$ and $x\in E_n$.
   If, for every $t\ge 0$ and $\lambda\in\nz^d$,
   $$
   \lim_{n\to\infty}\|T_t^{(n)}\pi_nf_\lambda-\pi_nT_tf_\lambda\|
   \ :=\ \lim_{n\to\infty}\sup_{x\in E_n}|T_t^{(n)}\pi_nf_\lambda(x)-\pi_nT_tf_\lambda(x)|
   \ =\ 0,
   $$
   where $f_\lambda(x):=e^{-\langle \lambda,x\rangle}
   :=e^{-(\lambda_1x_1+\cdots+\lambda_dx_d)}$ for all $\lambda\in\nz^d$ and
   $x\in E$, then $X^{(n)}$ converges in $D_E[0,\infty)$ to $X$ as $n\to\infty$.
\end{proposition}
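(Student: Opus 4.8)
The plan is to reduce the assertion to the classical Trotter--Kato-type functional limit theorem for Markov processes on a locally compact separable state space; the only genuine work is to propagate the convergence hypothesis from the generating family $\{f_\lambda:\lambda\in\nz^d\}$ to all of $\widehat C(E)$. Throughout, $E=[0,\infty)^d$ is locally compact and separable, and every (sub-)Markovian transition operator---in particular each $T_t^{(n)}$ and each $T_t$---as well as each projection $\pi_n$ is a contraction for the supremum norm.

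\emph{Step 1: upgrading the test functions.} By linearity, the hypothesis $\|T_t^{(n)}\pi_n f_\lambda-\pi_n T_t f_\lambda\|\to 0$ (for $\lambda\in\nz^d$, $t\ge 0$) extends at once to every $g$ in the linear span of $\{f_\lambda:\lambda\in\nz^d\}$, which is exactly the set $D$ appearing in Lemma~\ref{dense}. Fix $f\in\widehat C(E)$, $t\ge 0$ and $\varepsilon>0$, and use Lemma~\ref{dense} to pick $g\in D$ with $\|f-g\|<\varepsilon$. For $x\in E_n$,
$$
|T_t^{(n)}\pi_n f(x)-\pi_n T_t f(x)|\le\|T_t^{(n)}\pi_n(f-g)\|+|T_t^{(n)}\pi_n g(x)-\pi_n T_t g(x)|+\|\pi_n T_t(g-f)\|;
$$
the first and third summands are each at most $\|f-g\|<\varepsilon$ by the contraction property (of $\pi_n$ and $T_t^{(n)}$, resp.\ of $\pi_n$ and $T_t$), while the middle summand tends to $0$ as $n\to\infty$ by the hypothesis and linearity---and, $\|\cdot\|$ being a supremum over $E_n$, this bound is uniform in $x$. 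Hence $\limsup_{n\to\infty}\|T_t^{(n)}\pi_n f-\pi_n T_t f\|\le 2\varepsilon$, and letting $\varepsilon\downarrow 0$ gives
$$
\lim_{n\to\infty}\|T_t^{(n)}\pi_n f-\pi_n T_t f\|=0\qquad\text{for all }f\in\widehat C(E)\text{ and all }t\ge 0.
$$

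\emph{Step 2: the limit theorem.} The convergence just obtained is precisely the extended-semigroup-convergence condition (through the embeddings $\pi_n$) in the standard functional limit theorem for Markov processes on a locally compact separable state space: if the semigroups converge in this sense and the initial distributions converge, then the processes converge in $D_E[0,\infty)$; see Ethier and Kurtz \cite{ethierkurtz} (Chapter~1 for the Trotter--Kato circle of ideas, and Chapter~4 for the passage from semigroup convergence to Skorohod convergence, e.g.\ Theorem~4.2.11 and its corollaries). Applying this---together with the convergence of the initial distributions, which is tacitly assumed and is, in every application of the proposition, trivially satisfied because $X_0^{(n)}$ and $X_0$ are the same deterministic point---yields both convergence of the finite-dimensional distributions and relative compactness of $(X^{(n)})_{n\in\nz}$ in $D_E[0,\infty)$, hence $X^{(n)}$ converges in $D_E[0,\infty)$ to $X$.

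\emph{Main difficulty.} I do not expect a deep obstacle; the points needing care are: (i) it is the contraction property of all three families of operators that turns the $3\varepsilon$-estimate of Step~1 into convergence \emph{uniform in $x\in E_n$} rather than merely pointwise; and (ii) matching the hypotheses of the cited theorem---$E=[0,\infty)^d$ is locally compact and separable, the restriction of $(T_t)_{t\ge 0}$ to $\widehat C(E)$ is a Feller semigroup for the processes to which the proposition is applied (so that convergence for each fixed $t$, together with strong continuity of the limit, upgrades to convergence uniform for $t$ in compact intervals, and products with $\widehat C$-functions stay in $\widehat C$ as needed for the finite-dimensional argument), and the $X^{(n)}$ are allowed to be arbitrary transition-function Markov processes on $E_n\subseteq E$, which is exactly the generality in which the cited results are formulated.
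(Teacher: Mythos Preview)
Your proposal is correct and follows essentially the same route as the paper: extend the hypothesis by linearity to the span $D$ of the $f_\lambda$, invoke Lemma~\ref{dense} together with the contraction property of $T_t^{(n)}$, $T_t$ and $\pi_n$ in a $3\varepsilon$-argument to obtain $\|T_t^{(n)}\pi_n f-\pi_n T_t f\|\to 0$ for all $f\in\widehat C(E)$, and then appeal to the Ethier--Kurtz convergence theorem (the paper cites \cite[p.~172, Theorem~2.11]{ethierkurtz}, which is the same result you point to). Your added remarks on the Feller property and initial distributions are reasonable clarifications but not part of the paper's terse argument.
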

\begin{proof}
   By assumption, $\lim_{n\to\infty}\|T_t^{(n)}\pi_nf-\pi_nT_tf\|=0$
   for all $f\in D$, where $D:=\{f:E\to\rz\,:\,f(x)=\sum_{i=1}^m
   a_i e^{-\langle\lambda,x\rangle},m\in\nz,\lambda\in\nz^d,a_i\in\rz\}$
   Let $f\in\widehat{C}(E)$ and fix $\varepsilon>0$.
   Since $D$ is dense in $\widehat{C}(E)$ by Lemma \ref{dense} there
   exists $h\in D$ such that $\|f-h\|<\varepsilon$. It follows that
   \begin{eqnarray*}
      \|T_t^{(n)}\pi_nf-\pi_nT_tf\|
      & \le & \|T_t^{(n)}\pi_n(f-h)\| + \|T_t^{(n)}\pi_nh-\pi_nT_th\|
              + \|\pi_nT_t(h-f)\|\\
      & \le & \|T_t^{(n)}\|\,\|f-h\| + \|T_t^{(n)}\pi_nh-\pi_nT_th\|
              + \|T_t\|\,\|h-f\|\\
      & \le & 2\varepsilon + \|T_t^{(n)}\pi_nh-\pi_nT_th\|
      \ \to \ 2\varepsilon,\qquad n\to\infty.
   \end{eqnarray*}
   Since $\varepsilon>0$ can be chosen arbitrarily we conclude that
   $\lim_{n\to\infty}\|T_t^{(n)}\pi_nf-\pi_nT_tf\|=0$ for all $f\in\widehat{C}(E)$.
   The result follows from \cite[p.~172, Theorem 2.11]{ethierkurtz}.\hfill$\Box$
\end{proof}
%
%

\end{document}